\documentclass{article}
\usepackage{tikz}
\usepackage{mathtools}
\usepackage{amsmath}
\usepackage{amsfonts}
\usepackage{amssymb}
\usepackage{amsthm}
\usepackage{mathrsfs}
\usepackage{manfnt}
\usepackage{xcolor}
\usepackage{tcolorbox}
\usepackage{hyperref}
\usepackage{enumitem}
\usepackage{upgreek}
\usepackage{scalerel}
\usepackage{graphicx,stackengine}

\theoremstyle{plain}
\newtheorem{theorem}{Theorem}[section]
\theoremstyle{definition}
\newtheorem{definition}[theorem]{Definition}

\theoremstyle{plain}
\newtheorem{lemma}[theorem]{Lemma}

\theoremstyle{remark}
\newtheorem{propparagraph}[theorem]{}

\theoremstyle{remark}
\newtheorem{example}[theorem]{Example}
\newtheorem{remark}[theorem]{Remark}
\theoremstyle{plain}
\newtheorem{introTheorem}{Theorem}

\newcommand{\BourbakiCA}[1]{\cite[]{}}
\newcommand{\BourbakiAlg}[1]{\cite[]{bourbaki1998algebra,bourbaki2013algebra}}
\newcommand{\BourbakiLieAlg}[1]{\cite[]{}}
\newcommand{\BourbakiSet}[1]{\cite[]{}}

\DeclareMathOperator{\Tan}{Tan}

\newcommand{\weight}{\mathbin{\vrule height 1.6ex depth 0pt width
		0.14ex\vrule height 0.14ex depth 0pt width 1.3ex}}

\DeclareMathOperator{\G}{{\bf G}}
\DeclareMathOperator{\Var}{{\bf V}}
\DeclareMathOperator{\RVar}{{\bf RV}}
\DeclareMathOperator{\IVar}{{\bf IV}}

\DeclareMathOperator{\dist}{dist}

\DeclareMathOperator{\dmn}{dmn}

\DeclareMathOperator{\spt}{spt}
\DeclareMathOperator{\D}{D}

\DeclareMathOperator*{\Clos}{Clos}

\DeclareMathOperator{\Int}{Int}
\DeclareMathOperator*{\bfalpha}{\boldsymbol{\alpha}}

\DeclareMathOperator*{\Density}{\boldsymbol{\rm \Theta}}
\DeclareMathOperator{\vVarOp}{{\rm\bf v}}

\DeclareMathOperator{\bfT}{{\bf T}}

\DeclareMathOperator{\bfh}{{\bf h}}

\DeclareMathOperator{\im}{im}

\DeclareMathOperator{\grad}{grad}

\newcommand{\CodeComment}[1]{}

\newcommand{\Addresses}{{\bigskip
  \footnotesize

YTL:
\textsc{Department of Mathematics, National Taiwan Normal University, No. 88, Sec.4, Ting-Chou Road, Taipei, 116059, Taiwan (R.O.C.)}\par\nopagebreak
  \textit{Email address:} \texttt{80940004S@ntnu.edu.tw}

\medskip

MW:
\textsc{Department of Mathematics, National Taiwan Normal University, No. 88, Sec.4, Ting-Chou Road, Taipei, 116059, Taiwan (R.O.C.)}\par\nopagebreak
  \textit{Email address:} \texttt{myles.workman@math.ntnu.edu.tw}
}}

\author{Yu Tong Liu and Myles Workman}
\title{The space-time-Grassmann measure of the Brakke flow}
\date{}

\begin{document}
\maketitle

\begin{abstract}
    For a $k$-dimensional Brakke flow on an open subset $U \subset \mathbf{R}^{n}$, over an open time interval $J$, we prove the existence of a canonical space-time-Grassmann measure $\lambda$, over $J \times \mathbf{G}_{k} (U)$, and give a characterisation of the flow with respect to the space-time weight of this measure.
    This results in a new definition of the Brakke flow, as that of a space-time measure which satisfies the Brakke inequality in a distributional sense.
    Each such space-time measure corresponds to a class of equivalent (classical) Brakke flows, thus yielding an equivalence between the classical definitions of the Brakke flow, and this new definition.
    Moreover, we prove that the mean curvature vector, density, and tangent map along the flow, are all measurable with respect to this space-time weight measure. 
\end{abstract}

\textbf{MSC 2020:} 53E10, 28A75

\textbf{Keywords:} Brakke flow, space-time-Grassmann measure

\tableofcontents
\section{Introduction}

 The $k$-dimensional Brakke flow (introduced by Brakke \cite{BrakkeBook}) on an open subset $U$ of $\mathbf{R}^{n}$, over an open time interval $J$, is a family of Radon measures on $U$, which evolve as a weak, measure theoretic solution to the mean curvature flow (see \ref{Ilmanen-def-Brakke flow} for a definition). 
In this paper, given a Brakke flow, we prove the existence of a canonical space-time-Grassmann measure $\lambda$, on $J \times \mathbf{G}_{k} (U)$, associated to the flow, and give a characterisation of the flow with respect to the space-time weight $\| \lambda \|$, of this measure. 
The existence (and definition) of $\lambda$ is given in \textbf{Theorem \ref{intro:thm:2}}, and the characterisation of the Brakke flow with respect to $\| \lambda \|$, is summarized in \textbf{Theorem \ref{intro:thm:4}}, but first, we give some motivation behind this space-time-Grassmann measure, and this characterisation 

One motivation is that these measures yield a natural equivalence class of Brakke flows (\ref{Space time Brakke flow:4}), with each class having well defined left continuous (\ref{Space time Brakke flow:1}), and right continuous (\ref{Space time Brakke flow:2}), representative flows (\ref{Space time Brakke flow:3}). 
These left and right representatives characterise the jump discontinuities in the Brakke flow, which occur when these representative flows disagree. 
This arises from the fact that Brakke flows exhibit the behaviour of one dimensional functions of bounded variation (\ref{Space time Brakke flow:0}). 
Moreover, \textbf{Theorem \ref{intro:thm:4}} gives a new definition for the Brakke flow, as that of a space-time measure which satisfies the distributional inequality (\ref{eq:Brakke's distributional inequality}) (see \ref{def:Brakke flow:space time measure}).
We also prove that the mean curvature vector, density, and tangent map along the flow, are all measurable with respect to the space-time weight measure $\| \lambda \|$ (\textbf{Theorem \ref{intro:thm:3}}).
A further motivation, is that the formulation of these measures, and the properties that we have proven, lay the groundwork for results on integral varifolds with locally bounded first variation, to be appropriately adapted to this parabolic setting.

To prove the existence of this measure, we recall that for a Brakke flow $\{\mu (t) \colon t \in J \}$, on $U$, for $\mathscr{L}^{1}$ almost every $t \in J$, there exists a unique integral varifold $V (t) \in \mathbf{IV}_{k} (U)$, such that $\| V (t) \| = \mu (t)$.
We prove that the map $t \mapsto V (t)$ is $\mathscr{L}^{1}$ measurable on $J$ (\ref{Brakke flow:thm:1:6}), and thus, can define this space-time-Grassmann measure in the following way: 

\begin{introTheorem}[see \ref{thm:Brakke flow:space time measure}]\label{intro:thm:2}
Suppose 
$J$ is an open interval, $U$ is an open subset of $\mathbf R^n$, and
$\{\mu (t) \colon t \in J\}$ is a Brakke flow, with the associated family of integral varifolds, $\{V (t) \colon t \in \text{dmn} \, V \}$.
Then there exists a Radon measure $\lambda$ over $J \times \mathbf{G}_{k} (U)$, characterised by 
\[\textstyle  \lambda(f) = \int \int f(t,x,S) \ d V(t)_{(x,S)} \ d \mathscr L^1, \ \text{for $f\in \mathscr K(J \times \G_k(U))$}.\]
\end{introTheorem}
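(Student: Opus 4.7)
The plan is to use the Riesz--Markov representation theorem. Define, for each $f \in \mathscr K(J \times \G_k(U))$, the candidate value
\[
  \Lambda(f) \;=\; \int_{J} \Bigl(\int_{\G_k(U)} f(t,x,S)\, dV(t)_{(x,S)}\Bigr)\, d\mathscr L^1(t).
\]
What must be checked is (i) that the inner integrand is $\mathscr L^1$-measurable in $t$, (ii) that the outer integral is finite, and (iii) that $\Lambda$ is a positive linear functional on $\mathscr K(J\times\G_k(U))$. Once these are in hand, Riesz--Markov produces the unique Radon measure $\lambda$ on $J\times\G_k(U)$ with $\lambda(f)=\Lambda(f)$, which is exactly the characterisation claimed.

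For (i), the input we are granted is that $t \mapsto V(t)$ is $\mathscr L^1$-measurable on $\dmn V$, assumed earlier in the text (cited as \ref{Brakke flow:thm:1:6}). In concrete terms, this gives $\mathscr L^1$-measurability of $t \mapsto V(t)(\psi)$ for each fixed $\psi \in \mathscr K(\G_k(U))$. The map we need to handle is $t \mapsto V(t)(f(t,\cdot,\cdot))$ for $f$ jointly continuous and compactly supported in $(t,x,S)$. First I would handle the product case $f(t,x,S)=\varphi(t)\psi(x,S)$, where measurability is immediate. Then I would extend to the algebra of finite sums of such products, and pass to general $f$ via a Stone--Weierstrass approximation on the compact product $\overline{\pi_J(\spt f)} \times \overline{\pi_{\G_k(U)}(\spt f)}$, approximating $f$ uniformly by elements of that algebra and using that $V(t)$ has finite mass on each compact set (being an integral varifold) to transfer uniform convergence into pointwise convergence of the $t$-dependent integrals.

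For (ii), observe that if $\spt f \subset [a,b]\times K$ for some compact interval $[a,b]\Subset J$ and compact $K \subset \G_k(U)$, then $|V(t)(f(t,\cdot,\cdot))| \leq \|f\|_\infty \cdot V(t)(\chi_K)$, and the mass bound $V(t)(\chi_K) = \|V(t)\|(\pi(K)) = \mu(t)(\pi(K))$ is locally bounded in $t$ because the Brakke flow has locally finite total mass on compact subsets of $J$ (e.g.\ via the Brakke inequality applied to a smooth cutoff, yielding monotonicity up to a lower-order term). Hence $\Lambda(f)$ is finite.

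For (iii), linearity is clear from linearity of the inner integral and of $\mathscr L^1$-integration, positivity is inherited from $V(t)\geq 0$, and continuity with respect to the inductive-limit topology on $\mathscr K(J\times\G_k(U))$ follows from the bound in (ii): if $\spt f_n \subset [a,b]\times K$ and $f_n \to 0$ uniformly, then $|\Lambda(f_n)| \leq \|f_n\|_\infty \cdot (b-a)\cdot \sup_{t\in[a,b]}\mu(t)(\pi(K)) \to 0$. Riesz--Markov then furnishes the Radon measure $\lambda$ on $J\times\G_k(U)$ with $\lambda(f)=\Lambda(f)$, as required. The main obstacle is the joint measurability step in (i); everything else is bookkeeping once the measurability of $t\mapsto V(t)$ has been upgraded to measurability of $t\mapsto V(t)(f(t,\cdot,\cdot))$.
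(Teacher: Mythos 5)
Your proof is correct and follows essentially the same route as the paper: the paper's own proof of \ref{Brakke flow:space time measure:1} simply cites the local mass bound from \ref{Brakke flow:thm:1:2} and the measurability of $t \mapsto V(t)(\alpha)\cdot\omega(t)$ from \ref{Brakke flow:thm:1:6}, then invokes the Riesz representation to define $\lambda$, exactly as you do; you merely make the Stone--Weierstrass upgrade from product test functions to general $f\in\mathscr K(J\times\G_k(U))$ explicit, which the paper leaves implicit. (One small slip worth fixing: for a compact $K\subset\G_k(U)$ one has $V(t)(\chi_K)\leq\|V(t)\|(\pi(K))=\mu(t)(\pi(K))$ rather than equality, since $K\subset\pi^{-1}(\pi(K))$; the inequality is all the bound needs.)
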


Upon defining the Radon measure $\lambda$, we then 
associate the space-time weight measure $\|\lambda\|(f) = \int  f(t,x) \ d \lambda_{(t,x,S)}$ for $f\in \mathscr K(J\times U)$.
We now state the previously mentioned characterisation.

\begin{introTheorem}\label{intro:thm:4}\label{thm:Space time Brakke flow}
    Suppose $J \subset \mathbf R$ is a bounded open interval, 
    $U$ is an open subset of $\mathbf R^n$, 
    and $V$ is a Radon measure over $J\times \G_k(U)$. 
    For $\mathscr{L}^{1}$ almost all $t \in J$ we may define the following Radon measures over $\mathbf{G}_{k} (U)$,
    \begin{gather*}
        \langle V^+,t\rangle  (\alpha) = 
        \lim\limits_{\varepsilon \downarrow 0} \frac{1}{\varepsilon} \textstyle \int_{\{ (s,x,S) : t \leq s < t + \varepsilon \}} \alpha(x,S) \ d V_{(s,x,S)},\\
        \langle V^-,t\rangle  (\alpha) = 
        \lim\limits_{\varepsilon \downarrow 0} \frac{1}{\varepsilon} \textstyle \int_{\{ (s,x,S) : t -\varepsilon< s \leq  t  \}} \alpha(x,S) \ d V_{(s,x,S)},\\
        \langle V,t\rangle  (\alpha) = 
        \lim\limits_{\varepsilon \downarrow 0} \frac{1}{2\varepsilon} \textstyle \int_{\{ (s,x,S) : t -\varepsilon< s \leq  t + \varepsilon \}} \alpha(x,S) \ d V_{(s,x,S)},
    \end{gather*}
    for $\alpha \in \mathscr K(\G_k(U))$.
    Now further suppose that, for every non-negative $\omega \in \mathscr D(J)$, and every non-negative, class $2$ $\phi :  U \to \mathbf R$, with compact support, we have that
    \begin{equation}\label{eq:Brakke's distributional inequality}
        - \textstyle \int \omega'(t) \phi(x) \ d \|V\|_{(t,x)} 
        \leq \textstyle \int \omega(t) \mathscr B(\|\langle V,t\rangle \|,\phi) \ d \mathscr L^1_t \in \mathbf R. 
    \end{equation}
    Then the following five statements hold:
    \begin{enumerate}[label=\ref{thm:Space time Brakke flow}.\arabic{enumi}]
        \item \label{Space time Brakke flow:0}
        For every non-negative $\phi : U \to \mathbf R$ of class $2$ with compact support, 
        the function $\|\langle V,t\rangle \|(\phi)$ for $t\in J$ is of locally bounded variation. 
        Moreover, for every $\alpha \in \mathscr{K} (\mathbf{G}_{k} (U))$,
        and $\omega \in \mathscr K(J)$, 
        \[ V_{(t,x, S)}(\omega(t)\cdot \alpha(x, S)) = \textstyle \int \omega(t)\cdot \langle V,t\rangle (\alpha) \ d \mathscr L^1_t,\]
        
        \item \label{Space time Brakke flow:1} $\|\langle V^-,\cdot \rangle\|$ is left continuous, with domain $J$, and 
        \[  \lim\limits_{t \downarrow a} \|\langle V^-,t\rangle \| \leq \|\langle V^-,a\rangle \| \ \text{for $a\in J$}.\]
        \item \label{Space time Brakke flow:2}
        $\|\langle V^+,\cdot \rangle\|$ is right continuous, with domain $J$, and 
        \[ \lim\limits_{t \uparrow a} \|\langle V^+,t\rangle \| \geq \|\langle V^+,a\rangle \| \  \text{for $a\in J$}.\]
        \item \label{Space time Brakke flow:3}
        $\|\langle V^+,\cdot \rangle\| $ and $\|\langle V^-,\cdot \rangle\| $ are  Brakke flows, and $\|\langle V^-,\cdot\rangle \|\geq  \|\langle V^+,\cdot\rangle \|$. 
        \item \label{Space time Brakke flow:4} 
        For an $\mathscr L^1\weight J$ measurable function $\nu$, from $J$
        to the Radon measures over $U$ with
        \[  \textstyle  \|V\|(\theta) = \int \int \theta(t,x) \ d \nu(t)_x \ d \mathscr L^1_t \ \text{for $\theta \in \mathscr K(J\times U)$},\]
        to be a Brakke flow,
        it is necessary and sufficient that for every $t\in J$ 
        \[  \|\langle V^- ,t \rangle \| \geq \nu(t) \geq \|\langle V^+,t\rangle \| .\]
        If one of the equivalent conditions holds and 
        \begin{equation}
        \label{sufficient condition for extension}
            \textstyle  \sup \{ -\int \omega'(t)\cdot \|\langle V,t \rangle \| (\phi) \ d \mathscr L^1_t : \omega \in \mathscr D(J), \  \sup \im |\omega|\leq 1\}<\infty
        \end{equation}  
        for non-negative $\phi : U \to \mathbf R$ of class $2$ with compact support, then, denote $a = \inf J$ and $b = \sup J$,
        we may extend $\nu$ to endpoints $a$ and $b$ by 
        \[ \nu(b) = \lim\limits_{s\uparrow b} \nu(s), \ \nu(a) = \lim\limits_{s\downarrow a} \nu(s)  \]
        to obtain a Brakke flow.
    \end{enumerate}
\end{introTheorem}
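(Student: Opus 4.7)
The strategy is to interpret hypothesis (\ref{eq:Brakke's distributional inequality}) as a one-dimensional $\mathrm{BV}$ estimate parametrised by the spatial test function $\phi$. For each nonnegative $\phi \in C_c^{2}(U)$ the distribution $T_\phi : \omega \mapsto V(\omega(t)\phi(x))$ on $J$ has distributional derivative bounded above by the $L^{1}_{loc}$ integrand $t \mapsto \mathscr{B}(\|\langle V,t\rangle\|,\phi)$, itself majorised by $\sup(|\nabla\phi|^{2}/\phi)$ times the local mass; consequently $T_\phi$ is represented by a locally $\mathrm{BV}$ function $f_\phi$ on $J$. A Riesz representation and disintegration argument, first for scalar $\phi \in C_c^{2}(U)$ and then upgraded to test functions $\alpha \in \mathscr K(\G_k(U))$ by dominating $|\alpha(x,S)| \leq C\phi(x)$, yields an $\mathscr L^{1}\weight J$ measurable family $t \mapsto V(t)$ of Radon measures on $\G_k(U)$ with $V = V(t) \otimes \mathscr L^{1}$. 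I expect the main obstacle to be making this disintegration uniform in $\alpha$: the exceptional time sets $N_\alpha$ on which $f_\alpha$ is irregular must be collected into a single $\mathscr{L}^1$-null set, which is handled by separability of $\mathscr K(\G_k(U))$ combined with the Kolmogorov-type continuity bound above.

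\textbf{Steps \ref{Space time Brakke flow:0}--\ref{Space time Brakke flow:2}.} With the disintegration in hand, Fubini reduces the symmetric, right, and left averages defining $\|\langle V,t\rangle\|(\phi)$, $\|\langle V^+,t\rangle\|(\phi)$, $\|\langle V^-,t\rangle\|(\phi)$ to the corresponding Lebesgue derivatives of $\int f_\phi$; for a $\mathrm{BV}$ function these converge everywhere to $\tfrac12(f_\phi^-(t)+f_\phi^+(t))$, $f_\phi^+(t)$, and $f_\phi^-(t)$, giving the representation in \ref{Space time Brakke flow:0}. Since the distributional inequality says $f_\phi(t) - \int_c^{t}\mathscr{B}(\|\langle V,s\rangle\|,\phi)\,ds$ is non-increasing, one has $f_\phi^+(t) \leq f_\phi^-(t)$ for every $t$, and thus the left continuity and limit inequality $\lim_{t\downarrow a}f_\phi^-(t) = f_\phi^+(a) \leq f_\phi^-(a)$ of \ref{Space time Brakke flow:1}, together with the symmetric assertion of \ref{Space time Brakke flow:2}, are immediate at the scalar level; promotion to inequalities of measures uses density of $C_c^{2}$ and monotone convergence.

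\textbf{Steps \ref{Space time Brakke flow:3}--\ref{Space time Brakke flow:4}.} The same monotonicity yields, for $t_1 < t_2 \in J$ and either sign,
\[
    f_\phi^\pm(t_2) - f_\phi^\pm(t_1) \leq \int_{t_1}^{t_2}\mathscr{B}(\|\langle V^\pm,s\rangle\|,\phi)\,ds,
\]
using that $\|\langle V^-,s\rangle\| = \|\langle V^+,s\rangle\|$ for $\mathscr{L}^1$-a.e.\ $s$; this is the Brakke inequality for each one-sided representative, giving \ref{Space time Brakke flow:3}, with the ordering $\|\langle V^-,\cdot\rangle\| \geq \|\langle V^+,\cdot\rangle\|$ inherited from $f_\phi^- \geq f_\phi^+$. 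For \ref{Space time Brakke flow:4}, if $\nu$ is any Brakke flow with $\|V\| = \nu \otimes \mathscr{L}^1$, then $\nu(t)(\phi) = f_\phi(t)$ a.e.; letting $t_1 \uparrow t$ along this a.e.\ set in the Brakke inequality for $\nu$ forces $\nu(t)(\phi) \leq f_\phi^-(t)$, while $t_2 \downarrow t$ gives $\nu(t)(\phi) \geq f_\phi^+(t)$. Conversely, any family $\nu$ sandwiched between these two representatives automatically satisfies the Brakke inequality, since the non-increasing function above controls jump contributions from both sides. Finally, hypothesis (\ref{sufficient condition for extension}) promotes $f_\phi$ from locally to globally $\mathrm{BV}$ on $J$, so one-sided limits at $\inf J$ and $\sup J$ exist, and the Brakke inequality at the closed endpoints follows by passing to the limit in the interior estimate already established.
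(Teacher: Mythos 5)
Your approach is essentially the paper's own: exploit the distributional Brakke inequality to show that $T_\phi'$ differs from the $L^1_{\mathrm{loc}}$ function $t \mapsto \mathscr B(\|\langle V,t\rangle\|,\phi)$ by a nonpositive distribution, which Riesz representation promotes to a Radon measure, so that $T_\phi$ is represented by a locally BV function; then the left/right/symmetric averages $\|\langle V^\pm,t\rangle\|(\phi)$, $\|\langle V,t\rangle\|(\phi)$ are identified with the one-sided and precise representatives of that function, and the conclusions follow from one-dimensional BV structure. The paper corrects by a linear function $L\cdot t$ (using an a priori upper bound on the Brakke variation) while you correct by the integral $\int_c^t \mathscr B\,ds$; both give a nonincreasing function, so this is an immaterial variant.

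One item in your plan should be cut or repaired, although it does not infect your later steps. You claim a disintegration $V = V(t)\otimes\mathscr L^1$ at the level of the Grassmann bundle, obtained by ``upgrading'' the BV estimate from $\phi(x)$ to general $\alpha \in \mathscr K(\G_k(U))$ via domination $|\alpha(x,S)|\le C\phi(x)$. That domination does not transfer one-sided control of $T_\phi'$ to $T_\alpha'$ — the hypothesis \eqref{eq:Brakke's distributional inequality} constrains only $\|V\|$ on $J\times U$, and gives no information about the time-regularity of $V$ in the Grassmann direction. (A disintegration of $V$ against $\mathscr L^1$ does in fact follow once one knows the projection of $\|V\|$ onto $J$ is absolutely continuous with respect to $\mathscr L^1$, which is \ref{Space time Brakke flow:0}; but this is by a purely measure-theoretic argument, not by extending the BV estimate to $\alpha$.) Fortunately none of \ref{Space time Brakke flow:0}--\ref{Space time Brakke flow:4} requires a disintegration of $V$ itself: all conclusions are statements about $\|\langle V^\pm,\cdot\rangle\|$ and $\|\langle V,\cdot\rangle\|$, which only involve test functions of the form $\phi(x)$, and your Steps paragraphs correctly only use the scalar BV functions $f_\phi$. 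The ``Kolmogorov-type continuity bound'' mentioned in the plan is also not a recognizable ingredient here; what is actually used is separability of the test function space plus the $\phi$-wise BV estimate.

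Minor points worth attending to in a write-up: (i) for \ref{Space time Brakke flow:1}, \ref{Space time Brakke flow:2} you need the limits $\|\langle V^\pm,t\rangle\|(\phi)$ to exist for all $t\in J$ and all $\phi \in \mathscr K(U)$, not merely for class-$2$ $\phi$, which requires a local-mass-bound argument to justify passing the uniform approximation under the limit; and (ii) in \ref{Space time Brakke flow:3} and \ref{Space time Brakke flow:4} you must verify that the Brakke variation $\mathscr B(\nu(t),\phi)$ is $L^1_{\mathrm{loc}}$ in $t$ (the positive part is bounded, the negative part is controlled by the hypothesis) so that the integral remainder in the Brakke inequality vanishes as $t_1\uparrow t$ or $t_2\downarrow t$. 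You used both implicitly; the paper handles them via the correction $L\cdot t$ and \cite[2.8.17, 2.9.8]{MR41:1976}.
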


As previously noted, this new definition of the Brakke flow arising from \textbf{Theorem \ref{intro:thm:4}} (see \ref{def:Brakke flow:space time measure}), is equivalent to the classical definition of \ref{Ilmanen-def-Brakke flow}.
Another alternate classical definition of the Brakke flow (see \ref{def:Brakke flow}.\ref{def pt: time integrated definition of Brakke flow}), is also commonly found in the literature (\cite{tonegawa2019brakke}).
Lahiri (\cite{Lahiri2017EqualityOT}), has shown the equivalence of these two classical definitions (see also Ambrosio and Soner \cite[4.4]{AS-GeneralisedBrakkeFlow}), with a crucial aspect of the proof being the upper semicontinuity of the Brakke variation \cite[2.7]{Lahiri2017EqualityOT} (with the method of proof originating from Ilmanen \cite[7.3]{Ilmanen-EllipticRegularization}).
The method of proof in \cite[7.3]{Ilmanen-EllipticRegularization} can be summarised as consisting of two steps. 
Fixing a non-negative class 2 function $\phi$, with compact support in $U$, the first step in \cite[7.3]{Ilmanen-EllipticRegularization} is to prove the upper semicontinuity for non-negative, class 2 functions with compact support in $\{x \colon \phi (x) > 0\}$, and the second step is to extend this to $\phi$ by means of an appropriate approximation. 
However, as noted in \ref{Ilmanen-nonexistence-of-approximation}, such an approximation does not in general exist. 
In \ref{prop:interior convergence}, we give an alternate proof of this upper semicontinuity, which avoids the need for this approximation. 
Once we have this upper semicontinuity, the equivalence of these two classical definitions of the Brakke flow follows from arguments contained in \cite{Lahiri2017EqualityOT} (see \ref{def:Brakke flow}).
This upper semicontinuity is also crucial in the compactness theory for Brakke flows (\cite[7.1]{Ilmanen-EllipticRegularization}), as well as in this current paper, in proving the $\mathscr{L}^{1}$ measurability of the family of integral varifolds along the flow (\ref{Brakke flow:thm:1:6}). 

As previously mentioned, we also prove that the tangent map, mean curvature vector, and density are all $\|\lambda\|$ measurable along the flow.

\begin{introTheorem}[see \ref{thm:measurable function:tangent,density,mean curvature vector}]\label{intro:thm:3}
    
    Whenever $\lambda$ and $V$ are as in {\bf Theorem} \ref{intro:thm:2}, the function mapping $(t,x)\in J \times U$ onto 
    \[\textstyle  (\Tan(V(t),x) , \bfh(V(t),x), \Density^k(\|V(t)\|,x)) \in \G(n,k)\times \mathbf R^n\times \mathbf R\]
    is $\|\lambda\|$ measurable, with $\|\lambda\|$ measurable domain 
    covering $\|\lambda\|$ almost every point of $J\times U$,
    where $\Tan(V,a)$ denotes the unique tangent plane of $V$ at $a$ (see \ref{Unique tangent plane}).
\end{introTheorem}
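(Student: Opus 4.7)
The plan is to construct a $\|\lambda\|$-measurable set $E \subset J \times U$ of full $\|\lambda\|$-measure on which all three quantities are simultaneously defined, and then show each is $\|\lambda\|$-measurable on $E$ by exhibiting it as a pointwise limit of jointly $\|\lambda\|$-measurable auxiliary functions built out of $V(t)$ and $\delta V(t)$.

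First, for $\mathscr{L}^1$-almost every $t \in J$, $V(t)$ is an integral $k$-varifold with locally bounded first variation; let $D \subset J$ be this full-measure set. For $t \in D$, standard integral varifold theory (Allard's rectifiability theorem together with Lebesgue--Besicovitch differentiation of the first variation measure) yields a $\|V(t)\|$-conull Borel set $E_t \subset U$ on which the unique tangent plane $\Tan(V(t),x)$, the positive-integer density $\Density^k(\|V(t)\|,x)$, and the mean curvature vector $\bfh(V(t),x)$ all simultaneously exist. Set $E := \{ (t,x) \in D \times U : x \in E_t \}$. By \textbf{Theorem \ref{intro:thm:2}}, $\|\lambda\|$ is the $\mathscr{L}^1$-integral of $t \mapsto \|V(t)\|$, so once $E$ is shown to be $\|\lambda\|$-measurable, Fubini yields $\|\lambda\|((J \times U) \setminus E) = 0$.

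Next, for $\phi \in \mathscr{K}(\G(n,k))$, $\psi \in \mathscr{K}(\mathbf{R}^n)$, and $r > 0$, consider the auxiliary function
\[ F_{\phi,\psi,r}(t,x) := \textstyle \int \psi\!\left(\tfrac{y-x}{r}\right) \phi(S) \, dV(t)_{(y,S)}. \]
For fixed $(x,r)$ the map $t \mapsto F_{\phi,\psi,r}(t,x)$ is $\mathscr{L}^1$-measurable by the $\mathscr{L}^1$-measurability of $t \mapsto V(t)$ from \ref{Brakke flow:thm:1:6}, and for fixed $t \in D$ the map $x \mapsto F_{\phi,\psi,r}(t,x)$ is continuous, by uniform continuity of $\psi$ and dominated convergence. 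A Carath\'eodory-type argument, using separability of $U$ together with a countable dense subset of base points, identifies $F_{\phi,\psi,r}$ with a Borel, and hence $\|\lambda\|$-measurable, function on $J \times U$. The same argument applied to the vector-valued first variation $\delta V(t)$ (a Radon $\mathbf{R}^n$-valued measure for $t \in D$) produces analogous $\|\lambda\|$-measurable auxiliaries $G_{\psi,r}(t,x) := \delta V(t)(\psi((\cdot - x)/r))$.

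Finally, approximating indicator functions of balls monotonically by such continuous auxiliaries, the pointwise identities
\[ \Density^k(\|V(t)\|,x) = \lim_{r \downarrow 0} \omega_k^{-1} r^{-k} \|V(t)\|(\mathbf{B}(x,r)), \]
\[ \phi(\Tan(V(t),x)) = \lim_{r \downarrow 0} \frac{\int_{\mathbf{B}(x,r) \times \G(n,k)} \phi(S)\, dV(t)_{(y,S)}}{\|V(t)\|(\mathbf{B}(x,r))} \text{ for } \phi \in C(\G(n,k)), \]
\[ \bfh(V(t),x) = -\lim_{r \downarrow 0} \frac{\delta V(t)(\mathbf{B}(x,r))}{\|V(t)\|(\mathbf{B}(x,r))}, \]
which are valid on $E$, exhibit each of $\Density^k$, $\Tan$, and $\bfh$ as pointwise limits of $\|\lambda\|$-measurable functions on $E$. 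For the tangent plane, a countable dense $\{\phi_i\} \subset C(\G(n,k))$ separates points of the compact metric space $\G(n,k)$, so $\|\lambda\|$-measurability of each $(t,x) \mapsto \phi_i(\Tan(V(t),x))$ implies $\|\lambda\|$-measurability of $(t,x) \mapsto \Tan(V(t),x)$ itself. Restricting $E$ further to the (measurable) set on which all three limits exist delivers the $\|\lambda\|$-measurable domain of the theorem. The principal obstacle is the joint measurability in the previous paragraph: bridging the $\mathscr{L}^1$-measurability in $t$ with the product structure of $\|\lambda\|$ so that limits as $r \downarrow 0$ can be taken inside the measurability conclusion; once the auxiliaries $F_{\phi,\psi,r}$ and $G_{\psi,r}$ are set up, the three identifications above are standard consequences of integral varifold theory.
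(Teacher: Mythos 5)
Your approach is correct, but it is a genuinely different route from the paper's. The paper's proof (for \ref{thm:measurable function:tangent,density,mean curvature vector}) works at the level of the varifold space: it first shows, after a localization reducing to $U=\mathbf R^n$, that the relations $\bfT$, $\bfh$, $\Density$, viewed as subsets of $\Var_k(U)\times U\times(\text{target})$, are Borel, using countable intersections of closed sets exploiting right continuity in the radius $r$ (see \ref{general measurability of quotient}); Federer's criterion \cite[2.2.13]{MR41:1976} (a function with Borel graph is Borel with Borel domain) then gives Borel measurability of the maps $(V,x)\mapsto \Tan(V,x)$, etc., which are finally composed with the $\|\lambda\|$-measurable map $(t,x)\mapsto(V(t),x)$ coming from \ref{Brakke flow:thm:1:6} and \ref{Brakke flow:space time measure:2}, \ref{Brakke flow:space time measure:4}. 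You instead avoid the intermediate Borel-regularity statement on $\Var_k(U)\times U$ entirely and build Carath\'eodory auxiliaries $F_{\phi,\psi,r}(t,x)$ directly from the flow, then pass to pointwise limits. Both reach the conclusion; the paper's route produces a reusable Borel-regularity result purely about varifold space (independent of Brakke flows), whereas yours is more hands-on and stays inside the flow. One imprecision worth noting: since $t\mapsto V(t)$ is only $\mathscr L^1\weight J$-measurable (not Borel in $t$), the Carath\'eodory argument yields $\mathscr L^1\otimes\mathscr B(U)$-joint measurability of your auxiliaries, not Borel measurability on $J\times U$ as you assert; this is nonetheless enough, because \ref{Brakke flow:space time measure:4} makes every product of an $\mathscr L^1$-measurable set with a Borel set $\|\lambda\|$-measurable and the $\|\lambda\|$-measurable sets form a $\sigma$-algebra. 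You should also make the passage from continuous mollifiers to indicators of closed balls explicit (approaching $b_{x,r}$ from above, using right continuity in $r$), and be careful about the sign convention for $\bfh$ relative to the paper's \ref{proppar:integral characterization of generalized mean curvature}; neither affects the substance of the argument.
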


The first step in this proof, is proving that these maps are Borel functions in the space $\mathbf{V}_{k} (U) \times U$, and thus, by composition with the map $(t, x) \mapsto (V (t), x)$, are $\| \lambda\|$ measurable. 
The Borel regularity follows from a standard technique, that is showing that the graph of the functions are Borel sets 
and then applying \cite[2.2.10]{MR41:1976}.

Finally, we note that formulations of a space-time-Grassmann measure, and its associated weight measure, for 
weak measure theoretic solutions to the mean curvature flow, have been explored previously.
In particular, Hensel and Laux (\cite{10.4310/jdg/1747065796})
have proposed a new weak solution concept for the mean curvature flow, which includes, in its definition, the existence of a space-time-Grassmann measure. 
The existence of such a flow was then proven in \cite{10.4310/jdg/1747065796} using an Allen-Cahn approximation. 
Moreover, Buet, Leonardi, Masnou and Sagueni (\cite{buet2025approximatemeancurvatureflows}), have recently utilised the idea of a space-time-Grassmann measure, in proving an existence result for the Brakke flow via a discrete time-step approximation scheme. 
 
 \section*{Acknowledgments}

Both authors are extremely grateful to Prof. Ulrich Menne, for sharing many ideas that have gone into this paper, in particular for the proof of \ref{prop:interior convergence}.
Both authors would also like to thank the anonymous referee for their feedback and comments.

\section{Notation}
    The  positive integers are denoted by $\mathscr P$.
    Unless otherwise stated, $n,k\in \mathscr P$ with $k\leq n$, and
    $U$ is an open subset of $\mathbf R^n$.
    The real numbers are denoted by $\mathbf R$, $\G(n,k)$ denotes the $k$ dimensional subspaces of $\mathbf R^n$.
    Suppose $X$ is a metric space, 
    $\mathbf U(a,r)$ and $\mathbf B(b,r)$ denote the open and closed ball respectively.
    Whenever $\phi$ measures a set $X$, 
    $A \subset X$, let $\phi\weight A( S) = \phi(A \cap S)$
    for $S \subset X$.
\begin{definition}\label{def:upper deriviatve, right and left upperderivative}
    Suppose $J$ is an interval, $f : J \to \mathbf R$, for every $t\in J$, we denote 
    \[ \D^-_s f(s)|_t= \limsup\limits_{s \uparrow t} \frac{f(s)-f(t)}{s-t}, \ \D^+_s f(s)|_t = \limsup\limits_{s \downarrow t} \frac{f(s)-f(t)}{s-t},\]
    and we denote their $\sup$ by $\overline{\D}_s f(s)|_t$, which equals
    \[ \limsup\limits_{s \to t}\frac{f(s)-f(t)}{s-t}.\]
    
    Whenever $X$ is a locally compact Hausdorff space,
    $K$ is a compact subset of $X$,
    $E$ is a normed space, 
    $\mathscr K(X,E)$ denotes the space of continuous functions from $X$ to $E$, with compact support,
    and $\mathscr K_K(X,E)$ the subspace of $f\in \mathscr K(X,E)$ with $\spt f \subset K$, and  let $\mathscr K(X) = \mathscr K(X,\mathbf R)$.
    We denote $\mathscr{K}' (X, E)$ to be the space of locally bounded linear functionals on $\mathscr{K} (X, E)$, i.e. $L \in \mathscr{K}' (X, E)$, if and only if, for all compact sets $K \subset X$, 
    \begin{equation*}
        \textstyle \sup \{ L (\phi) \, \colon \, \phi \in \mathscr{K}_{K} (X, E), \ \sup_{x \in E} |\phi (x)| \leq 1 \} < \infty.
    \end{equation*}

    Whenever $U$ is an open subset of $\mathbf R^n$ and $Y$ is a Banach space,
    $\mathscr D(U,Y)$ denotes the set of infinitely differentiable functions from $U$ to $Y$, with compact support, 
    $\mathscr D'(U,Y)$ denotes the corresponding distributions, and we define 
    \[ \boldsymbol{\nu}_K^i(\phi) = \sup \{ |\D^i\phi(x)| : x\in K\},\]
    for $\phi \in \mathscr D(U,Y)$, and compact subset $K$ of $U$.

\end{definition}

\begin{definition}[\protect{\cite[2.15]{Menne-WDFV}}]
      Suppose
      $U$ is an open subset of $\mathbf R^n$,
      $Y$ is a Banach space, and $T \in \mathscr D'(U,Y)$, then the  variation measure of $T$ is 
      the largest Borel regular measure 
      satisfying the following property, 
      \[ \|T\|(G) = \sup \{ T(\theta) : \theta \in \mathscr D(U,Y), \spt \theta \subset G, \sup \im \theta \leq 1\}\]
      for every open subset $G$ of $U$. 
      Whenever $Y$ is separable, 
      we say $T$ is representable by integration if $\|T\|$ is a Radon measure.
      In this case (see \cite[2.5.12]{MR41:1976}), 
      there exists a $\|T\|$ almost unique measurable function $\xi$ to 
      continuous linear functionals on $E$  
      such that for $\|T\|$ almost every $x$, 
      $|\xi(x)| = 1$,
           by the following property,
      \[\textstyle T(g) = \int \langle g(x) , \xi (x) \rangle \ d \|T\|_x \ \text{for $g\in  \mathscr D(U,Y)$}.\]
\end{definition}

\begin{definition}[\protect{\cite[3.5]{Allard1972OnTF}}]
\label{def:varifold}
    Suppose
    $U$ is an open subset of $\mathbf R^n$, then
    $\Var_k(U)$, $\RVar_k(U)$, and $\IVar_k(U)$ denote the sets of varifolds,
    rectifiable varifolds, and integral varifolds, respectively.
\end{definition}

  \begin{definition}\label{proppar:integral characterization of generalized mean curvature}
  Suppose $V \in \Var_k(U)$,
  $\delta V$ is representable by integration,
  then we define the function $\boldsymbol{\eta}(V,\cdot)$ to be characterised by
  \[z \bullet {\boldsymbol{\eta}(V,x)} = \lim\limits_{r\downarrow 0 } \frac{\delta V(b_{x,r}\cdot z)}{ \|\delta V\| (\mathbf B(x,r))} \ \text{whenever $z\in \mathbf R^n$ and the limit exists},
  \]
  and the generalized mean curvature $\bfh(V,\cdot)$ of $V$ is then characterised by
    \[ z \bullet \bfh(V,x) = \lim\limits_{r\downarrow 0} \frac{\delta V(  b_{x,r} \cdot z)}{\|V\|(\mathbf B(x,r))} \ \text{whenever $z\in \mathbf R^n$ and the limit exists},\]
 where $b_{x,r}$ is the characteristic function of $\mathbf B(x,r)$, note that 
 by \cite[2.5.12, 2.9.5]{MR41:1976}, $\|V\|(U \sim \dmn \bfh(V,\cdot)) + \|\delta V\|(U \sim \dmn \boldsymbol{\eta}(V,\cdot)) = 0$.
    \end{definition}

    \begin{definition}[\protect{\cite[3.4]{Allard1972OnTF}}]
    \label{Unique tangent plane}
        Whenever $\phi$ measures a metric space $X$,
        the $k$ dimensional upper and lower density is defined by 
        \[ \textstyle \Density^{k \ast }(\phi,a) = \limsup\limits_{r\downarrow 0}\frac{\phi(\mathbf B(a,r))}{\bfalpha(k)r^k}, \quad \Density^{k}_\ast(\phi,a) = \liminf\limits_{r\downarrow 0}\frac{\phi(\mathbf B(a,r))}{\bfalpha(k)r^k}\]
        where $\bfalpha(k) = \mathscr L^k(\mathbf B(0,1)) $,
        in case $\Density^{k \ast }(\phi,a) = \Density^{k}_\ast(\phi,a)$, their common value is denoted by 
        $\Density^k(\phi,a)$.
        A varifold $V \in \Var_k(U)$ 
        has unique tangent plane at $a\in U$, if 
        there exists $T\in \G(n,k)$
        such that $0<\Density^k(\|V\|,a)<\infty$,
        \[  \textstyle \Density^k(\|V\|,a)\cdot \int_T \varphi(x,T) \ d \mathscr H^k_x = \lim\limits_{r\downarrow 0}r^{-k} \int \varphi(r^{-1}(x-a),S) \ d V_{(x,S)}\]
        for 
        every $\varphi\in \mathscr K(\G_k(\mathbf R^n))$.
    \end{definition}

\section{Brakke flow}

\begin{definition}[\protect{\cite[6.3]{Ilmanen-EllipticRegularization}}]
\label{Ilmanen-def}
    Whenever 
    $U$ is an open subset of $\mathbf R^n$,
    $\mu$ is a Radon measure over $U$,
    $\phi : U \to \mathbf R$ is nonnegative
    of class $2$ with compact support,
    we denote $U_\phi$ to be the set of $x\in U$ such that $\phi(x)>0$, and $V_\phi$ the restriction of $V$ to subsets to $\G_k(U_\phi)$.
    If the following four conditions are satisfied 
    \begin{enumerate}[label=B-\arabic{enumi}]
        \item \label{integral}
         $\mu \weight U_\phi$ is a weight of some $V\in \IVar_k(U)$.
        \item \label{locally Radon}
        $\|\delta V_\phi\|$ is Radon measure over $U_{\phi}$.
        \item 
        \label{locally absolutely continuous}
     $\|\delta V_\phi\|$ is absolutely continuous with respect to $\|V_\phi\|$.
        \item 
        \label{weighted L2 mean curvature} 
        $\int \phi(x)\cdot |\bfh(V_\phi,x)|^2 \ d \|V_\phi\|_x<\infty$ ,
    \end{enumerate}
    then we define 
    \begin{equation}
        \label{regular case}\tag{{$\mathscr B$}}
        \textstyle \mathscr B(\mu,\phi)  = \int -\phi (x)\cdot |\bfh(V_\phi,x)|^2 +\grad \phi(x)\bullet \bfh(V_\phi,x)\ d \|V_\phi\|_x
    \end{equation} 
      Otherwise, we define
    $\mathscr B(\mu,\phi)=-\infty$.
\end{definition}
\begin{remark}\label{Locality of Brakke variation}
    The value of $\mathscr B(\mu,\phi)$ depends only on $\mu\weight U_\phi$.
\end{remark}
\begin{remark}\label{second order rectifiability} 
    If
    $\mu$ is a Radon measure over $U$,
    $-\infty < \mathscr B(\mu,\phi)$,
    then by \ref{integral}, there exists 
    $V \in \IVar_k(U)$ such that 
    $\|V\| = \mu \weight U_\phi$ and $\delta V_\phi$ is representable by integration, and by \cite[5.8]{BrakkeBook} or \cite[Theorem 1]{MR3023856},
    \begin{equation*}
        \bfh(V_\phi,x) \in S^\perp \ \text{for $V_\phi$ almost every $(x,S) \in \G_k(U_\phi)$},
        \end{equation*} 
    \begin{equation*}
        \mathscr B(\mu,\phi) = \textstyle \int -\phi(x) |\bfh(V_\phi,x)|^2 + S^\perp_\natural(\grad \phi (x))\bullet \bfh(V_\phi,x) \  d (V_{\phi})_{(x,S)}.
    \end{equation*}
\end{remark}

\begin{definition}\label{Ilmanen-def-Brakke flow}
   Suppose  $U$ is an open subset of $\mathbf{R}^{n}$, and $J$ is an interval of $\mathbf{R}$. 
    Then a function $\mu$ from $J$ to set of Radon measures over $U$ is a Brakke flow, if and only if for every  $s \in J$ and non-negative $\phi : U \to \mathbf R$ of class $2$ with compact support,
    \begin{equation*}
        \overline{\D}_{t} \mu  (t) (\phi)|_{s} \leq \mathscr{B} (\mu (s), \phi).
    \end{equation*}
\end{definition}

\begin{lemma}[\protect{\cite[2.2]{Lahiri2017EqualityOT}}]\label{proppar:1}
Suppose 
$V \in \IVar_k(U)$, 
    $\phi : U \to \mathbf R$ is a non-negative function of class $2$ with compact support, and
    $\|\delta V_\phi\|$ is a Radon measure.
Then the following two inequalities hold,
\begin{enumerate}[label=\ref{proppar:1}.\arabic{enumi}]
    \item \label{Elementary estimate:from schwartz ineq}
\[ \textstyle  \mathscr B(\|V\|,\phi)\leq 
        \int -\frac{\phi(x)}{2} \cdot |\bfh(V_\phi,x)|^2 + \frac{|\grad \phi(x)|^2}{ 2 \phi(x)} d \|V_\phi\|_x.\]
    \item \label{Elementary estimate:from diff eq} 
    \begin{align*}
          \mathscr B(\|V\|,\phi)\leq \textstyle \int
        - \D \grad \phi(x) \bullet S  \ d (V_{\phi})_{(x,S)}.
        \end{align*}
\end{enumerate}
  
    \end{lemma}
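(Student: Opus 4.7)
The plan is to split on whether $\mathscr B(\|V\|,\phi)>-\infty$ or not. If any one of \ref{integral}--\ref{weighted L2 mean curvature} of \ref{Ilmanen-def} fails then $\mathscr B(\|V\|,\phi)=-\infty$ and both \ref{Elementary estimate:from schwartz ineq} and \ref{Elementary estimate:from diff eq} hold vacuously. So I may assume all four conditions hold and apply the formula \eqref{regular case} for $\mathscr B(\|V\|,\phi)$; in particular $V_\phi\in\IVar_k(U_\phi)$, $\|\delta V_\phi\|$ is a Radon measure absolutely continuous with respect to $\|V_\phi\|$, and $\int\phi|\bfh(V_\phi,\cdot)|^2\,d\|V_\phi\|<\infty$.

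For \ref{Elementary estimate:from schwartz ineq}, I argue pointwise on $U_\phi$: since $\phi(x)>0$ there, Young's inequality gives
\[
\grad\phi(x)\bullet\bfh(V_\phi,x) \,\leq\, \tfrac{1}{2}\phi(x)\,|\bfh(V_\phi,x)|^2 + \frac{|\grad\phi(x)|^2}{2\phi(x)},
\]
and substituting into the integrand of \eqref{regular case} yields the claim.

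For \ref{Elementary estimate:from diff eq}, the idea is to use the first variation to remove $\bfh$ from the linear term, and then simply discard the nonpositive quadratic term. By absolute continuity, $\bfh(V_\phi,\cdot)$ is (up to sign) the $\|V_\phi\|$-Radon--Nikodym derivative of $\delta V_\phi$, so that
\[
\int\D\grad\phi(x)\bullet S\,dV_\phi(x,S) \;=\; \delta V_\phi(\grad\phi) \;=\; -\int\grad\phi\bullet\bfh(V_\phi,\cdot)\,d\|V_\phi\|.
\]
Combined with \eqref{regular case}, this rewrites $\mathscr B(\|V\|,\phi)$ as $-\int\phi|\bfh(V_\phi,\cdot)|^2\,d\|V_\phi\|-\int\D\grad\phi\bullet S\,dV_\phi$. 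Since $\phi\geq 0$ and $|S_\natural\grad\phi|^2/(2\phi)\geq 0$ on $U_\phi$, dropping the former nonpositive term and adding the latter nonnegative term yields \ref{Elementary estimate:from diff eq}.

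The only step requiring technical care is the application of the first variation to the test field $X=\grad\phi$: although $\grad\phi$ vanishes on $\{\phi=0\}$ (every zero of the nonnegative class $2$ function $\phi$ is a global minimum), its support need not lie inside $U_\phi$. I would approximate by $\eta_\varepsilon\grad\phi$, taking $\eta_\varepsilon(x)=\psi(\phi(x)/\varepsilon)\in\mathscr D(U_\phi)$ for a suitable cutoff $\psi$, and pass to the limit using dominated convergence together with the Cauchy--Schwarz bound
\[
\int|\grad\phi\bullet\bfh(V_\phi,\cdot)|\,d\|V_\phi\| \,\leq\, \Bigl(\int\phi|\bfh(V_\phi,\cdot)|^2\,d\|V_\phi\|\Bigr)^{1/2}\Bigl(\int\tfrac{|\grad\phi|^2}{\phi}\,d\|V_\phi\|\Bigr)^{1/2},
\]
whose second factor is controlled by the classical estimate $|\grad\phi|^2\leq 2\phi\sup|\D^2\phi|$ valid for nonnegative $C^2$ functions, and using $\|V_\phi\|(\{\phi=0\})=0$ to dispatch the residual term involving $\grad\eta_\varepsilon$.
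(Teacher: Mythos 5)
Your proof is correct, and its overall architecture — reduce to the case $\mathscr B(\|V\|,\phi)>-\infty$, establish the first variation identity $\int\D\grad\phi\bullet S\,d(V_\phi)_{(x,S)}=-\int\grad\phi\bullet\bfh(V_\phi,\cdot)\,d\|V_\phi\|$, derive \ref{Elementary estimate:from schwartz ineq} by Young's inequality and \ref{Elementary estimate:from diff eq} by substituting the identity into \eqref{regular case} and dropping the nonpositive $-\int\phi|\bfh|^2$ term while adding the free nonnegative term $\int|S_\natural\grad\phi|^2/(2\phi)$ — matches the paper's. The one substantive divergence is in how the first variation identity is justified. The paper reaches for machinery: it invokes \cite[6.6]{Ilmanen-EllipticRegularization} to obtain $\|V_\phi\|(\spt\grad\phi)+\|\delta V_\phi\|_{(1)}(\grad\phi)<\infty$, and then applies \cite[9.3\,(2)]{menne2025prioriboundsgeodesicdiameter} (a general theorem permitting $\grad\phi$ as a test field given boundary control, with $B=U\sim U_\phi$) together with \cite[4.5]{10.2996/kmj/1521424824}; this keeps the proof short and delegates the cutoff bookkeeping to a prepared black box. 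Your approach instead carries out the cutoff $\eta_\varepsilon=\psi(\phi/\varepsilon)$ by hand, using the weighted Cauchy--Schwarz bound and the classical inequality $|\grad\phi|^2\leq 2\phi\sup|\D^2\phi|$ to kill the residual $\int\grad\eta_\varepsilon\bullet S_\natural\grad\phi\,dV_\phi$ term (which is supported in $\{\varepsilon\leq\phi\leq 2\varepsilon\}\cap U_\phi$, whose $\|V_\phi\|$-measure tends to zero). This is more self-contained and makes explicit what the paper's citations hide, at the cost of length; both routes are valid. For \ref{Elementary estimate:from schwartz ineq}, the paper merely says to adapt the argument of \cite[2.2]{Lahiri2017EqualityOT}, which is exactly your pointwise Young's inequality on $U_\phi$.

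Two small remarks. First, the map $\eta_\varepsilon=\psi(\phi/\varepsilon)$ is only of class $2$, not $\mathscr D(U_\phi)$; either mollify, or note that the first variation formula with integration representation extends to compactly supported $C^1$ fields, so no issue arises. Second, you should state explicitly that you use \eqref{regular case} (not the $S^\perp_\natural\grad\phi$ form from \ref{second order rectifiability}) so that the linear term is literally $\int\grad\phi\bullet\bfh\,d\|V_\phi\|$, and hence cancels exactly against $-\int\D\grad\phi\bullet S\,dV_\phi$ with no extra step needed.
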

\begin{proof}
    We may assume
$-\infty < \mathscr B(\|V\|,\phi)$, from \ref{regular case} we have 
\[ \textstyle \mathscr B(\|V\|,\phi)  = \int -\phi (x)\cdot |\bfh(V_\phi,x)|^2 +\grad \phi(x)\bullet \bfh(V_\phi,x)\ d \|V_\phi\|_x.\]
By \cite[6.6]{Ilmanen-EllipticRegularization} and noting that $-\infty < \mathscr B(\|V \|,\phi)$, we have 
\[ \|V_\phi\|(\spt \grad \phi)+ \textstyle \int |\grad \phi| \ d \|\delta V_\phi\|<\infty,\]
    we may   apply \cite[9.3(2)]{menne2025prioriboundsgeodesicdiameter} with
        \begin{gather*}
        B = U \sim U_\phi, \
        \eta = \grad \phi, \ 
        W =V_\phi,
        \end{gather*}
        taking account of \cite[4.5]{10.2996/kmj/1521424824}, to obtain
        \[ \textstyle \int \D \grad \phi(x) \bullet S \ d (V_\phi)_{(x,S)}  = -\int \bfh(V_\phi,x) \bullet \grad \phi(x) \ d \|V_\phi\|_x.\]
    Then \ref{Elementary estimate:from diff eq} follows, and 
    \ref{Elementary estimate:from schwartz ineq} follows from Young's inequality.
 \end{proof}

\begin{propparagraph}\label{prop:interior convergence}
    \cite[7.3]{Ilmanen-EllipticRegularization}
    Suppose $U$ is an open subset of $\mathbf R^n$,
    $V_1,V_2,\dots,\in \Var_k (U)$, 
     $\phi : U \to \mathbf R$ is nonnegative of class $2$
     with compact support, 
     \begin{gather*}
          \limsup\limits_{i\to\infty}
    \|V_i\|(U_\phi)<\infty. 
    \end{gather*} 
    Then $V_1\weight \mathbf{G}_{k} (U_\phi),V_2\weight \mathbf{G}_{k} (U_\phi),\dots$ contains a convergent subsequence with limit   $V \in \mathbf{V}_{k} (U)$ such that 
    \begin{equation}\label{eq:upper semicontinuity}
        \limsup \limits_{i\to\infty }\mathscr B(\|V_i\|,\phi) \leq \mathscr B(\|V\|,\phi).
    \end{equation}  
    Moreover, if the left hand side of $\eqref{eq:upper semicontinuity}$ is not  $-\infty$, 
    and $V_1,V_2,\dots \in \IVar_k(U)$,
    then the restriction of $V$ to subsets of $\mathbf{G}_{k}(U_{\phi})$ is an integral varifold.

\end{propparagraph}

\begin{proof}
    By the compactness of the space of varifolds, there exists a convergent subsequence, and a limiting $V \in \mathbf{V}_{k} (U)$.
    If 
   the left hand side of \eqref{eq:upper semicontinuity} 
   is $-\infty$,
    then the statement follows trivially. 
    Thus, we may assume that 
    \begin{equation}\label{eq:reduction to limsup is not minus infinity}
        -\infty < \limsup\limits_{i\to\infty }\mathscr{B} (\| V _{i} \|, \phi) ,
    \end{equation} and after potentially taking a subsequence and renumerating, we assume that 
    the limit  in \eqref{eq:reduction to limsup is not minus infinity} is attained, and
    \begin{equation*}\label{regular condition is satisfied}
    -\infty < \mathscr{B} (\| V _{i} \|, \phi)  \ 
    \text{
    for  $i \in \mathscr P$},
    \end{equation*}
    \begin{equation}
    \label{existence of weak limit:V}
    V(\alpha) = \lim\limits_{i\to\infty} V_i\weight \mathbf{G}_{k} (U_\phi) (\alpha) \ \text{for $\alpha \in \mathscr K(\G_k(U))$}.
    \end{equation}
    By \ref{integral}, there exists $W_i \in \IVar_k(U)$ such that 
    \begin{equation}
    \label{conclusion of B1 for Vi , phi}
        \|W_i\| = \|V_i\| \weight U_\phi,
    \end{equation}
     we denote  $W_{i,\phi}$  the restriction of $W_i$ to subsets of ${\G_k(U_\phi)}$.
     Recalling \ref{regular case}, we have 
     \begin{equation}
     \label{eq:recall the formula of regular case for W i}
     \begin{aligned}
     \mathscr B(\|V_{i}\|,\phi) &= 
     \mathscr B(\|W_i\|,\phi) \\
     &= \textstyle \int - \phi (x) |\bfh(W_{i,\phi},x)|^2 + \grad \phi(x) \bullet \bfh(W_{i,\phi},x)\ d \|W_{i,\phi}\|_x
     \end{aligned}
     \end{equation}
By \ref{Elementary estimate:from schwartz ineq} and \cite[6.6]{Ilmanen-EllipticRegularization}, we estimate
    for every $i\in \mathscr P$,
    \begin{equation}
    \label{bounds on varaition of W i phi}
        \mathscr B(\|W_{i}\|,\phi) \leq - 2^{-1}  \textstyle  \int \phi (x) \cdot |\bfh(W_{i,\phi},x)|^2 \ d \|W_{i,\phi}\|_x \\
        + \|W_{i,\phi}\|(U_\phi) \cdot \boldsymbol{\nu}_{\spt \phi}^2(\phi).
    \end{equation}
    In view of \eqref{eq:reduction to limsup is not minus infinity},\eqref{conclusion of B1 for Vi , phi}, and \eqref{bounds on varaition of W i phi}, 
    we may apply \cite[6.4]{Allard1972OnTF}, 
    passing to another subsequence if necessary, 
    to infer 
     there exists $W\in \Var_k(U)$, denoting $W_\phi$ 
     the restriction of $W$ to subsets of $\G_k(U_\phi)$, such that 
     
    \begin{equation}\label{existence of weak limit : W}
        W(\alpha) = \lim\limits_{i\to\infty} W_i (\alpha) \ \text{for $\alpha \in \mathscr K(\G_k(U))$},
    \end{equation} 
    \begin{equation}\label{conclusion of integral compactness theorem: W phi}
        W_\phi(\alpha) = \lim\limits_{i\to\infty} W_{i,\phi}(\alpha) \ \text{for $\alpha \in \mathscr K(\G_k(U_\phi))$},
    \end{equation} 
    \begin{equation}\label{W weight U cap phi is integral}
         W \weight \G_k(U_\phi) \in \IVar_k(U).
    \end{equation}
    
    For every $f \in \mathscr K(U)$ with $\spt f \subset U_\phi$, by \eqref{existence of weak limit:V}, 
    \eqref{conclusion of B1 for Vi , phi}, and 
    \eqref{existence of weak limit : W},
    \[ \|W\|(f) = \lim\limits_{i\to\infty} \|V_i\|(f) = \|V\|(f),\]
    it follows that $\|W\|(G) = \|V\|(G)$ for every open subset $G$ of $U_\phi$, 
    therefore
    \begin{equation*}
        \|W \weight \G_k(U_\phi)\| = \|V\| \weight U_\phi.
    \end{equation*}
   Note that \eqref{bounds on varaition of W i phi} implies
   $\|\delta W_{\phi}\|$ is a Radon measure and absolutely continuous with respect to $\| W_{\phi} \|$, and 
   by duality (see \cite[Lemma A.3]{Lahiri2017EqualityOT}), 
    we have for either $\Tilde{W}$ equals $W_\phi$, or $W_{i,\phi}$
    \begin{equation*}
    \begin{aligned}
        \textstyle (\int &\phi(x) \cdot |\bfh(\Tilde{W},x)|^2 \ d \|\Tilde{W}\|_x  )^{1/2} \\
        & \textstyle = \sup \{ \int  g (x)\bullet \bfh(\Tilde{W},x)\cdot   \phi^{1/2}(x) \ d \|\Tilde{W}\|_x : g\in \mathscr D(U_\phi,\mathbf R^n),\ \textstyle \int |g|^2 \ d \|\Tilde{W}\|\leq 1\},
        \end{aligned}
        \end{equation*}
        and one then readily verifies by \eqref{conclusion of integral compactness theorem: W phi} that 
        \begin{equation}
        \label{lower semicontinuity}
        \liminf\limits_{i\to\infty} \textstyle \int \phi(x) \cdot |\bfh(W_{i,\phi},x)|^2 \ d \|W_{i,\phi}\|_x  \geq \int \phi (x)\cdot |\bfh(W_\phi,x)|^2 \ d \| W_\phi\|_x.
            \end{equation}
            Moreover, from
        \eqref{W weight U cap phi is integral} and 
        \eqref{lower semicontinuity}, we infer that       
    \begin{equation}\label{eq:W is regular in U phi}
        -\infty < \mathscr B(\|W\|,\phi) = \mathscr B(\|V\|,\phi),
    \end{equation} 
            and by \cite[6.6]{Ilmanen-EllipticRegularization}, we have that
            \[ \|W_{i,\phi}\|(U_\phi) + 
            \textstyle \int |\grad \phi| \ d \|\delta W_{i,\phi}\| <\infty \ \text{for $i\in \mathscr P$},\]
            \[ \|W_{\phi}\|(U_\phi) + 
            \textstyle \int |\grad \phi | \ d \|\delta W_{\phi}\| <\infty.\]
        Now,
        we may apply \cite[9.3(2)]{menne2025prioriboundsgeodesicdiameter} with
        \begin{gather*}
        B = U \sim U_\phi, \
        \eta = \grad \phi, \ 
        W = W_{i,\phi}, \ W_\phi 
        \end{gather*}
        taking account of \cite[4.5]{10.2996/kmj/1521424824}, to obtain
        \begin{equation}
            \label{eq:upper semicontinuity:Divergence formula}
            \textstyle \int \D \grad \phi(x) \bullet S \ d (W_{i,\phi})_{(x,S)}  = -\int \bfh(W_{i,\phi},x) \bullet \grad \phi(x) \ d \|W_{i,\phi}\|_x.
        \end{equation} 
        \begin{equation}
            \label{eq:upper semicontinuity:Divergence formula:W phi}
            \textstyle \int \D \grad \phi(x) \bullet S \ d (W_{\phi})_{(x,S)}  = -\int \bfh(W_{\phi},x) \bullet \grad \phi(x) \ d \|W_{\phi}\|_x.
        \end{equation} 
        From \eqref{conclusion of B1 for Vi , phi}, we have $\|W_i\|(U\sim U_\phi) = 0$, therefore
        \[\textstyle \int \D \grad \phi(x) \bullet S \ d (W_{i,\phi})_{(x,S)} = \int \D \grad \phi(x) \bullet S \ d (W_i)_{(x,S)},\]  
        and use \eqref{existence of weak limit : W}
        and
        \eqref{eq:upper semicontinuity:Divergence formula}
        to conclude that
        \begin{equation}
            \label{limit of delta W i phi}
           \textstyle  \lim\limits_{i\to\infty } -\int \bfh(W_{i,\phi},x) \bullet \grad \phi(x) \ d \|W_{i,\phi}\|_x = \int \D \grad \phi(x) \bullet S \ d W_{(x,S)}.
        \end{equation}         
        Observing that for $x\in U\sim U_\phi$ and $S\in \G(n,k)$, $\D \grad \phi(x) \bullet S \geq 0$, we have that
        \[\textstyle  \int \D \grad \phi(x) \bullet S \ d W_{(x,S)} \geq \int_{\G_k(U_\phi)} \D \grad \phi(x) \bullet S \ d W_{(x,S)}.\]
        The main conclusion follows from combining 
        \eqref{eq:recall the formula of regular case for W i},
        \eqref{lower semicontinuity}
        \eqref{eq:W is regular in U phi},
        \eqref{eq:upper semicontinuity:Divergence formula:W phi}, and
         \eqref{limit of delta W i phi}.
         In case $V_1,V_2,\ldots \in \IVar_k(U)$,
    for every  $i \in \mathscr P$, $W_{i,\phi}$ equals the restriction of $V_i$ to subsets of $\G_k(U_\phi)$, and the postscript follows. 
\end{proof}

\begin{remark}
    The proof of \ref{prop:interior convergence} follows the proof given in \cite[7.3]{Ilmanen-EllipticRegularization}, with a difference being that in \cite[7.3]{Ilmanen-EllipticRegularization}, it is claimed that
    \begin{equation*}
    \begin{aligned}
        \textstyle  \limsup\limits_{i\to\infty} \int \grad \phi (x) \bullet \bfh(W_{i,\phi},x) \ d \|W_{i,\phi}\|_x 
        \textstyle \leq \int \grad  \phi (x) \bullet \bfh(W_\phi,x) \ d \|W_\phi\|_x
    \end{aligned}
    \end{equation*}
    with the proof using an interior approximation of the function $\phi$. 
    However, example \ref{Ilmanen-nonexistence-of-approximation} shows that, in general, such an interior approximation does not hold for class 2 functions.
\end{remark}
\begin{example}\label{Ilmanen-nonexistence-of-approximation}
    We define 
    $g : \mathbf R \to \mathbf R$,  
    $g(t) = t^2$ near $0$, and $\spt g \subset \mathbf B(0,1)$.
One readily verifies that 
$g''(0) = 2$, and whenever $\psi \in \mathscr C^2(\mathbf R)$ with 
\begin{equation}\label{eq:1}
\spt \psi \subset \{ t : g(t)>0\}
\end{equation}
$\psi^{(k)}(0) = 0$.
 But for a function $\psi$ of class $2$ that is uniformly close to $g$ on $V= \{ t : 0 < |t|<\varepsilon\}$ for some $0<\varepsilon<\infty$,
 its second order derivative must have a lower bound away from $0$ on $V$, but $\psi^{(2)}(0) = 0$ would lead to a contradiction.
 \end{example}

 \begin{remark}
    The inequality
    \begin{equation*}
        \textstyle  \limsup\limits_{i\to\infty} \int \grad \phi (x) \bullet \bfh(W_{i,\phi},x) \ d \|W_{i,\phi}\|_x 
        \textstyle \leq \int \grad  \phi (x) \bullet \bfh(W_\phi,x) \ d \|W_\phi\|_x,
    \end{equation*}
    in the proof of \ref{prop:interior convergence} is sharp, in the sense that equality will not be achieved in general.
    In particular, consider $U = \mathbf{R}^{2}$, $W_{i} \in \mathbf{IV}_{1} (\mathbf{R}^{2})$ corresponding to the circle of radius $1 / i$, centred at the origin, with weight $i$, and a non-negative class 2 function $\phi$ with compact support, such that $\phi (x) = |x|^{2}$, on some open neighbourhood of the origin.
 \end{remark}

\begin{theorem}\label{Brakke flow:thm:1}
    Suppose $t_1,t_2\in \mathbf R$, $t_1<t_2$,
    $J = \{ s : t_1 \leq s \leq t_2\}$, $\mu$ is a function from $J$ to the set of Radon measures over $U$, and $\mu$ is a Brakke flow.
     Then the following nine statements hold.
    \begin{enumerate}[label=\ref{Brakke flow:thm:1}.\arabic{enumi}]
    \item \label{Brakke flow:thm:1:1} Whenever $t_1 \leq s_1\leq s_2\leq s_1+R^2/(2k)$, $\mathbf B(x_0,2R)\subset U$,
        \begin{equation}
            \mu(s_2)(\mathbf U(x_0,R)) \leq 16 \mu(s_1)(\mathbf U(x_0,2R))
            \label{spherical barrier estimate}
        \end{equation} 
    \item \label{Brakke flow:thm:1:2}
    Whenever $K$ is a compact subset of $U$, there exists $0\leq M<\infty$ such that 
    \begin{gather}
        \sup \{ \mu(t)(K) :t_1 \leq t \leq t_2\}\leq M \label{Brakke flow mass bound},\\
        \sup \{ \mathscr B(\mu(t),\phi) :t_1\leq t \leq t_2\} \leq M\cdot \boldsymbol{\nu}_K^2(\phi) \label{Brakke flow variation bound}
    \end{gather}
    for every nonnegative $\phi : U \to \mathbf R$ of class $2$ with compact support in $K$.
    \item 
    \label{Brakke flow:thm:1:3}
    Whenever $t_1 < a \leq t_2$ ($t_1 \leq a < t_2$),
    \begin{equation}
        \mu(a) \leq \lim\limits_{t\uparrow a} \mu(t), \ (resp. \ \mu(a) \geq \lim\limits_{t\downarrow a} \mu(t)).
    \end{equation}
    \item 
    \label{Brakke flow:thm:1:3-add}
    $\mu$ is continuous except at an at most countable subset of $J$.
    \item 
    \label{Brakke flow:thm:1:3-add-add}
    Whenever $t_1 < t \leq t_2$ ( \ $t_1 \leq t < t_2$\ ) , $0\leq \phi \in \mathscr K(U)$, and
    \[ -\infty < \D_s^- \mu(s)(\phi)|_t, \ (-\infty < \D_s^+ \mu(s)(\phi)|_t ),\]
    the following equality holds
    \[ \mu(t) (\psi) = \lim\limits_{s\uparrow t} \mu(s)(\psi), \ (resp. \ \mu(t) (\psi) = \lim\limits_{s\downarrow t} \mu(s)(\psi) )\]
    for $\psi \in \mathscr K(U)$ with 
    $\{ x : \psi(x)\neq 0 \} \subset \{ x : \phi(x)>0\}$.
    \item
    \label{Brakke flow:thm:1:4}
    For $\mathscr L^1$ almost every $t$ with 
    $t_1\leq t \leq t_2$, there exists a $V(t)\in \IVar_k(U)$ such that 
    \begin{equation}\label{V is regular at time t}
     \|V(t)\| = \mu(t), \ \bfh(V(t),\cdot) \in \mathbf L_2^{loc}(\mu(t),\mathbf R^n).
    \end{equation} 
    \item 
    \label{Brakke flow:thm:1:5}
    Whenever $\phi : U \to \mathbf R$
    is nonnegative of class $2$ with compact support, 
    $t \mapsto \mathscr B(\|V(t)\|,\phi)$ is $\mathscr L^1\weight J$ measurable.
    \item
    \label{Brakke flow:thm:1:6}
    The function $t\mapsto V(t)$ is $\mathscr L^1\weight J$ measurable.
    \item 
    \label{Brakke flow:thm:1:7}
    For every  $\phi :  U \to \mathbf R$ nonnegative of class $2$
    with compact support,
    \begin{gather}
        \begin{aligned}
        \mu(b)(\phi) -\mu(a)(\phi)  
        \leq \textstyle \int_a^b \mathscr B(\mu(t),\phi) \ d \mathscr L^1_t\\
        \text{for $a,b\in J$ with $a\leq b$}.
        \end{aligned}
        \label{Brakke flow:thm:1:7:2}
    \end{gather} 
    \end{enumerate}
\end{theorem}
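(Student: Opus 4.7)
My plan is to establish the nine assertions in the listed order, each building on its predecessors. The backbone is the Brakke inequality of \ref{Ilmanen-def-Brakke flow} applied to well-chosen test functions, supplemented by the elementary estimates \ref{Elementary estimate:from schwartz ineq}--\ref{Elementary estimate:from diff eq} and the upper semicontinuity of the Brakke variation \ref{prop:interior convergence}. For the spherical barrier \ref{Brakke flow:thm:1:1}, I would plug into the Brakke inequality a classical radial test of the form $\phi(x) = ((2R)^{2} - |x-x_{0}|^{2})_{+}^{k+2}$, compute $\mathscr{B}(\mu,\phi)$ via \ref{Elementary estimate:from diff eq} to obtain a pointwise bound of the form $\mathscr{B}(\mu,\phi) \leq C k R^{-2} \mu(\mathbf{U}(x_{0},2R))\cdot (2R)^{2(k+1)}$, and then close by a Gr\"onwall-type comparison on the interval $[s_{1},s_{2}]$. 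Statement \ref{Brakke flow:thm:1:2} then follows: \eqref{Brakke flow mass bound} by covering $K$ with finitely many balls to which \ref{Brakke flow:thm:1:1} applies, and \eqref{Brakke flow variation bound} from \ref{Elementary estimate:from schwartz ineq} combined with the standard pointwise inequality $|\grad\phi|^{2} \leq 2\phi\,\boldsymbol{\nu}_{K}^{2}(\phi)$ valid for any nonnegative class $2$ function.

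The key observation for \ref{Brakke flow:thm:1:3}--\ref{Brakke flow:thm:1:4} is that \ref{Brakke flow:thm:1:2} together with the Brakke inequality implies $\overline{\D}_{t}[\mu(t)(\phi) - M\boldsymbol{\nu}_{K}^{2}(\phi)\,t] \leq 0$ for each class $2$ test $\phi$ supported in $K$, so this correction is non-increasing and $t \mapsto \mu(t)(\phi)$ is locally BV. Monotone limits then give \ref{Brakke flow:thm:1:3}; \ref{Brakke flow:thm:1:3-add} follows because non-increasing functions have at most countably many discontinuities, passed along a countable dense family in $\mathscr{K}(U)$; and for \ref{Brakke flow:thm:1:3-add-add}, the hypothesis $-\infty < \D_{s}^{-}\mu(s)(\phi)|_{t}$ extracts a sequence $s_{n}\uparrow t$ with $\mu(s_{n})(\phi) \to \mu(t)(\phi)$, so any weak subsequential limit $\nu \geq \mu(t)$ (by \ref{Brakke flow:thm:1:3}) satisfies $\nu(\phi) = \mu(t)(\phi)$, forcing $\nu = \mu(t)$ on $\{\phi > 0\}$ and hence $\mu(s)(\psi) \to \mu(t)(\psi)$ for $\psi \in \mathscr{K}(U)$ supported there. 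Statement \ref{Brakke flow:thm:1:4} follows from \ref{integral}: at every point $t$ of differentiability of $\mu(\cdot)(\phi)$ (a full-measure set by BV) the Brakke inequality yields $-\infty < \mathscr{B}(\mu(t),\phi)$, so $\mu(t)\weight U_{\phi}$ is the weight of an integral varifold; exhausting $U$ by $U_{\phi_{n}}$ for a countable family and invoking uniqueness of integral representatives produces a globally defined $V(t) \in \IVar_{k}(U)$ for $\mathscr{L}^{1}$ almost every $t$, with the $\mathbf{L}_{2}^{\mathrm{loc}}$ mean curvature bound coming from \ref{weighted L2 mean curvature}.

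The measurability assertions \ref{Brakke flow:thm:1:5}--\ref{Brakke flow:thm:1:6} together with the integrated form \ref{Brakke flow:thm:1:7} form the main technical obstacle. For \ref{Brakke flow:thm:1:5}, I would combine \ref{prop:interior convergence} with \ref{Brakke flow:thm:1:3-add-add} to deduce that $t \mapsto \mathscr{B}(\mu(t),\phi)$ is upper semicontinuous on the cocountable set of weak continuity points of $\mu(\cdot)$, and hence Borel measurable on all of $J$. Statement \ref{Brakke flow:thm:1:7} then follows by combining the absolutely continuous part of the BV inequality for $t \mapsto \mu(t)(\phi)$ with the pointwise Brakke inequality $\D_{t}\mu(t)(\phi) \leq \mathscr{B}(\mu(t),\phi)$ at almost every differentiability point. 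For \ref{Brakke flow:thm:1:6}, I would use rectifiability to write $V(t)$ as $\mu(t)$ paired with its approximate tangent plane, reducing $\mathscr{L}^{1}$ measurability of $t \mapsto V(t)$ to that of $t \mapsto V(t)(\alpha)$ for $\alpha$ in a countable dense subfamily of $\mathscr{K}(\G_{k}(U))$, again controlled via the upper semicontinuity and the compactness theorem for integral varifolds \cite[6.4]{Allard1972OnTF}.
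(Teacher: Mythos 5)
Your proposal follows essentially the same strategy as the paper, with the main difference being that the paper outsources statements \ref{Brakke flow:thm:1:1}--\ref{Brakke flow:thm:1:3-add-add} almost verbatim to Lahiri \cite{Lahiri2017EqualityOT} (specifically 3.3, 3.6, 4.2, 4.3), whereas you sketch the underlying arguments directly (the radial barrier test function, the pointwise bound $|\grad\phi|^2\leq 2\phi\,\boldsymbol{\nu}_K^2(\phi)$ feeding into \ref{Elementary estimate:from schwartz ineq}, the non-increasing correction $\mu(t)(\phi)-M\boldsymbol{\nu}_K^2(\phi)\,t$). The centre of gravity in both treatments is the replacement of Ilmanen's interior-approximation argument by the upper semicontinuity \ref{prop:interior convergence}, which you correctly identify as the driver of \ref{Brakke flow:thm:1:5} and \ref{Brakke flow:thm:1:6}.

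That said, your account of \ref{Brakke flow:thm:1:6} is too thin to count as a proof sketch. Saying that measurability of $t\mapsto V(t)(\alpha)$ is ``controlled via upper semicontinuity and the compactness theorem'' omits the one structural observation that makes the argument close: one must restrict to a full-measure set $A$ of continuity points with $V(t)\in\IVar_k(U)$, further stratify by $A(i)=A\cap\{t:\mathscr B(\|V(t)\|,\phi)\geq -i\}$, and then verify that $\{t:V(t)(\alpha)\geq r\}\cap A(i)$ is relatively closed in $A$. The mechanism is: along $t_n\to t$ in $A(i)$, the bound on $\mathscr B$ yields a precompact family via Allard's compactness theorem, \ref{prop:interior convergence} identifies the subsequential limit $W$ as an integral varifold with $\|W\weight\G_k(U_\phi)\|=\mu(t)\weight U_\phi$, and since integral varifolds are determined by their weight this forces $W=V(t)$ on $\G_k(U_\phi)$, hence $V(t)(\alpha)=\lim_n V(t_n)(\alpha)\geq r$. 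This is what converts varifold compactness into Borel measurability of $t\mapsto V(t)$, and it is the novel measurability statement the paper is after; ``reducing to a countable dense family of $\alpha$'' is only the trivial half. The same remark applies, more mildly, to your treatment of \ref{Brakke flow:thm:1:7}: your appeal to ``the absolutely continuous part of the BV inequality'' is correct once the corrected function is shown to be genuinely non-increasing (so that its singular part has the right sign), but that monotonicity requires more than $\overline{\D}h\leq 0$ pointwise — one needs the one-sided semicontinuity furnished by \ref{Brakke flow:thm:1:3}, which is precisely what Lahiri's Lemma A.2 (invoked by the paper) packages up.
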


\begin{proof}
    Proof of \ref{Brakke flow:thm:1:1}, 
     the proof is adapted from \cite[3.3]{Lahiri2017EqualityOT}, and we include it here for the reader's convenience.
     For every $(t,x)\in \mathbf R \times \mathbf R^n$, we define 
    \[ \varphi(t,x) = \sup \{ 1 -(2R)^{-2}(|x-x_0|^2 +2k(t-s_1)),0\}.\]
    One readily verifies that for every $t \in \mathbf R$,
        $\varphi(t,\cdot)\leq 1$;
    if $s_1 \leq t  <\infty$, 
    then 
        $\{ x :\varphi(t,x)>0\} \subset \mathbf U(x_0,2R)$;
    if $s_1\leq t \leq s_1 + \frac{R^2}{2k}$ and $|x-x_0|\leq R$, 
    then 
        $\varphi(t,x)\geq 1/2$.
    We compute for every  $p \in \mathscr P$ with $p\geq 3$, $x\in U$ and $s\in \mathbf R$, 
        \begin{gather*}
            \grad \varphi^p(s,x) = - p \cdot \varphi^{p-1}(s,x) \cdot (2R)^{-2} \cdot 2 (x-x_0),\\
            |S[\grad \varphi^p(s,x)]|^2 = \frac{p^2}{4R^4} \varphi^{2p-2}(s,x) |S(x-x_0)|^2 \label{thm:Brakke flow:1:1:1},\\
            S \bullet \D \grad \varphi^{p}(s,x) = \frac{-kp}{2R^2} \varphi^{p-1}(s,x) + \frac{p(p-1)}{4R^4} \varphi^{p-2}(s,x)|S(x-x_0)|^2\label{thm:Brakke flow:1:1:2}.
        \end{gather*}
    By \ref{Elementary estimate:from diff eq},
    for $s_1 \leq t \leq s_2$,  
        \begin{equation}\label{eq:spherical barrier}
            \textstyle \mathscr B(\mu(t),\varphi^4(t,\cdot))\leq \int \frac{4k}{2R^2}\varphi^{3}(t,x) \ d \mu(t)_x.
        \end{equation}
        
        Next, we will show that for $s_1 < t \leq t_2$,
         \[\D^-  \mu(t)(\varphi^4(t,\cdot)) \leq  \mathscr B(\mu(t),\varphi^4(t,\cdot)) -  2k R^{-2} \mu(t)(\varphi^3(t,\cdot)) \leq 0. \]
    By the mean value theorem, and nonincreasing property of $\varphi^3(s,\cdot)$ in variable $s$, 
    \[ \varphi^4(t-\delta,x) - \varphi^4(t,x) \geq  2k \delta R^{-2} (\varphi^3(t,x)) \]
    for $(t,x)\in \mathbf R \times \mathbf R^n$ and $0<\delta <\infty$.
    Since  $\D^-\mu(s)(\varphi^3(t,\cdot))|_t <\infty$, we have 
    \[ \liminf\limits_{\delta \downarrow 0} \mu(t-\delta)(\varphi^3(t,\cdot)) \geq \mu(t)(\varphi^3(t,\cdot)),\]
    the assertion then follows from expressing the finite difference 
    of $\D^- \mu(s)(\varphi^4(s,\cdot))|_t$ and \eqref{eq:spherical barrier}.

    By \cite[A.1]{Lahiri2017EqualityOT} with $a=s_1,b=s_2$, $f(s) = \mu(s)(\varphi^4(s,\cdot ))$ for $a \leq s \leq b$, and $L = 0$, and note 
    that $s_2 \leq s_1 + R^2/2k$, we have
        \[ (16)^{-1}\mu(s_2)(\mathbf U(x_0,R)) \leq \mu(s_2)(\varphi^4(s_2,\cdot)) \leq \mu(s_1)(\varphi^4(s_1,\cdot)) \leq \mu(s_1)(\mathbf U(x_0,2R)).\]
        
    Proof of \ref{Brakke flow:thm:1:2},
   the proof is adapted from \cite[3.6]{Lahiri2017EqualityOT}.
    Note that \eqref{Brakke flow variation bound} follows from 
    \eqref{Brakke flow mass bound} and 
    \cite[6.6]{Ilmanen-EllipticRegularization}, 
    and thus it is sufficient to prove 
    \eqref{Brakke flow mass bound}.
   Let $0< R = \sup \{ \dist(x, \mathbf R^n \sim U) : x \in K\}/4<\infty$. Then 
    \[ \mathbf B(x,2R) \subset U \ \text{for $x\in K$}.\]
    There exists finite set $B \subset K$ such that $K \subset \bigcup\{ \mathbf U(x,R) : x\in B\}$, and 
    there exists $t_1\leq u_1,\dots,u_N\leq t_2$ such that 
\[ \{s : t_1 \leq t \leq t_2 \} \subset \bigcup_{i=1}^N \{ s : u_i \leq s \leq u_i + (2 k)^{-1} R^{2} \}. \]
     We define the constant $M$ to be 
    \[  \sup \left\{ 16 \cdot \sum_{x\in B} \mu(u_i)(\mathbf U(x,2R)) : i \in \{ 1,\dots,N\}\right\},\]
    whenever $i\in \{1,\dots,N\}$ and $u_i \leq t \leq u_i + \frac{R^2}{2k}$, 
    we estimate  using \ref{Brakke flow:thm:1:1} with 
    $s_1 = u_i$, $s_2  = t$, and $x_0 = x \in B$ to obtain 
    \[ \mu(t)(K) \leq \sum_{x\in B} \mu(t)(\mathbf U(x,R)) \leq 16 \sum_{x\in B} \mu(u_i)(\mathbf U(x,2R)).\]
    Proof of \ref{Brakke flow:thm:1:3} and 
    \ref{Brakke flow:thm:1:3-add},
    the proof is identical to \cite[4.2]{Lahiri2017EqualityOT},
    by \ref{Brakke flow:thm:1:2}, there exists a positive number such that 
    \[ \sup \{ \overline{\D}_s\mu(s)(\phi)|_t:t_1 \leq t \leq t_2\}\leq M \cdot \boldsymbol{\nu}_K^2(\phi)\]
    for every nonnegative $\phi : U \to \mathbf R$ of class $2$ with  support in a compact subset $K$ of $U$,
    the assertions then follows from \cite[A.1]{Lahiri2017EqualityOT}.

Proof of \ref{Brakke flow:thm:1:3-add-add}, the proof is adapted from \cite[4.3]{Lahiri2017EqualityOT}, let
\begin{gather*}
    L(f) = \lim\limits_{s\uparrow t}\mu(s)(f)- \mu(t)(f) \quad (\mu(t) -\lim\limits_{s\downarrow t}\mu(s)(f) \quad resp.)\quad \text{for $f \in \mathscr K(U)$},
\end{gather*}
by \ref{Brakke flow:thm:1:3} and the Riesz representation theorem,
$L$ is represented by a Radon measure $\nu$ over $U$.
The conclusion then follows as $-\infty < \D^-\mu(s)(\phi)|_t$ ($-\infty < \D^+\mu(s)(\phi)|_t$ resp.) implies that $\nu(\{ x : \phi(x)>0\}) = 0$.

Proof of \ref{Brakke flow:thm:1:4},
choosing a sequence $\phi_i$ of compactly supported smooth functions on $U$ with 
        $\spt \phi_i \subset \Int \spt \phi_{i+1}$ and $U =  \bigcup_{i} \spt \phi_i $.
        For every $b,a\in J$ with $a\leq b$, we apply 
    \ref{Brakke flow:thm:1:2} with $K = \spt \phi_i$ to obtain $M_i$, and letting
    $L_i = M_i \cdot \nu^2_{K}(\phi_i)$, we have 
    \[ \mathscr B(\mu(t),\phi_i) \leq L_i \ \text{for $t_1\leq t \leq t_2$}.\]
   
    Let $A_i$ be the set of $t\in J$ such that 
    $-\infty<\mathscr B(\mu(t),\phi_i)$.
   By
    \cite[A.2]{Lahiri2017EqualityOT}, $\mathscr L^1(J\sim \bigcap_{i=1}^\infty A_i) = 0$, 
    one readily verifies that \eqref{V is regular at time t} holds for $t\in \bigcap_{i=0}^\infty A_i$. 
    
    Proof of \ref{Brakke flow:thm:1:5}, let 
    $A$ be the set of $t\in J$ such that 
     $\mu$ is continuous at $t$ and $V(t)\in \IVar_k(U)$,
      by \ref{Brakke flow:thm:1:4} and \ref{Brakke flow:thm:1:3-add}, $\mathscr L^1(J\sim A) = 0$.
    By \ref{Brakke flow:thm:1:2} and \ref{prop:interior convergence}, 
    $\mathscr B(\mu(\cdot),\phi)$ is upper semicontinuous on $A$,
the conclusion then follows.

    Proof of \ref{Brakke flow:thm:1:6},
    for every $\alpha \in \mathscr K(\G_k(U))$ and $r\in \mathbf R$, choose a nonnegative $\phi : U \to \mathbf R$ class $2$ with compact support such that 
   \[ \Clos \{ x : \alpha(x,\cdot)\neq 0\} \subset \{ x : \phi(x)>0\}, \]
    let $A$ be the set of $t\in J$ such that $\mu$ is continuous at $t$ and 
    $-\infty < \mathscr B(\|V(t)\|,\phi)$,
   and for every $i \in \mathscr P$, let
   \[ A(i) = A \cap \{ t : \mathscr B(\|V(t)\|,\phi)\geq -i \}.\]
   By \ref{prop:interior convergence}, 
   $\{t : V(t)(\alpha)\geq r\} \cap A(i)$
   is relatively closed in $A$.
   The conclusion then follows from
   \ref{Brakke flow:thm:1:4} and 
   \ref{Brakke flow:thm:1:5}.
   
   Proof of \ref{Brakke flow:thm:1:7}, 
  \eqref{Brakke flow:thm:1:7:2} follows from \ref{Brakke flow:thm:1:2} and \cite[A.2]{Lahiri2017EqualityOT}.
\end{proof}

\begin{definition}\label{definition of Brakke flow}
    Suppose 
    $J$ is an interval, 
    $U$ is an open subset of $\mathbf R^n$. 
    We call a function $V : J \to \Var_k(U)$ a Brakke flow if $\mu(t) = \|V(t)\|$, for $t\in J$, is a Brakke flow, and for $\mathscr L^1$ almost every $t\in J$, $V(t)\in \IVar_k(U)$.
\end{definition}

\section{Brakke flow: space time measure}

\begin{theorem}\label{thm:Brakke flow:space time measure}
    Suppose $J$ is an open interval and 
    $U$ is an open subset of $\mathbf R^n$,
    $V: J \to \Var_k(U)$ is a Brakke flow.
    Then the following four statements hold 
    \begin{enumerate}[ref=\ref{thm:Brakke flow:space time measure}.\arabic{enumi}]
        \item \label{Brakke flow:space time measure:1}
        There exists a Radon measure $\lambda$ over $J \times \G_k(U)$ characterised by 
        \[ \textstyle \lambda(f) = \int f(t,x,S) \ d V(t)_{(x,S)} \ d \mathscr L^1_t \]
        for $f\in \mathscr K(J\times \G_k(U))$.
        \item \label{Brakke flow:space time measure:2}Whenever $g : J \times \G_k(U) \to \overline{\mathbf R}$ is a $\lambda$ integrable function, $N$ is the set of $t$ such that $g(t,\cdot,\cdot)$ is not $V(t)$ integrable, we have $\mathscr L^1(N) = 0$ and 
        \[\textstyle  \int g \ d \lambda = \int \int g(t,x,S) \ d V(t)_{(x,S)} \ d \mathscr L^1_t.\]
        \item \label{Brakke flow:space time measure:3}
        If $N \subset J$ $\mathscr L^1(N) = 0$, then $\lambda(N\times \G_k(U)) = 0$.
        \item \label{Brakke flow:space time measure:4}
        If $I \subset  J$ is $\mathscr L^1$ measurable and $B \subset U \times \G(n,k)$ is a Borel set, then $I\times B$ is $\lambda$ measurable.
    \end{enumerate}
\end{theorem}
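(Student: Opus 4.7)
My strategy is to construct $\lambda$ via the Riesz representation theorem to obtain \ref{Brakke flow:space time measure:1}, to run a Fubini-type extension for \ref{Brakke flow:space time measure:2}, and then to deduce \ref{Brakke flow:space time measure:3} and \ref{Brakke flow:space time measure:4} as corollaries. For \ref{Brakke flow:space time measure:1}, fix $f \in \mathscr K(J \times \G_k(U))$ with $\spt f \subset I \times L$ for some compact $I \subset J$ and compact $L \subset \G_k(U)$. By the Stone--Weierstrass theorem, $f$ may be uniformly approximated on $I \times L$ by finite sums $\sum_j \omega_j(t)\alpha_j(x,S)$ with $\omega_j \in \mathscr K(J)$ and $\alpha_j \in \mathscr K(\G_k(U))$. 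For each such sum, $t \mapsto \sum_j \omega_j(t) V(t)(\alpha_j)$ is $\mathscr L^1$ measurable by \ref{Brakke flow:thm:1:6}, and the bound $\sup_{t \in I}\|V(t)\|(K) < \infty$ from \ref{Brakke flow:thm:1:2} (with $K \subset U$ compact containing the projection of $L$) lets these approximations converge pointwise in $t$ to $V(t)(f(t,\cdot,\cdot))$, yielding both its measurability and the estimate $|\lambda(f)| \leq \mathscr L^1(I) \sup |f| \sup_{t \in I}\|V(t)\|(K)$. Positivity and linearity being transparent, the Riesz representation theorem then furnishes the Radon measure $\lambda$.

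For \ref{Brakke flow:space time measure:2}, I would extend the Fubini-type identity $\lambda(A) = \int V(t)(A_t) \, d\mathscr L^1_t$, where $A_t = \{(x,S) : (t,x,S) \in A\}$, from $\mathscr K$-functions to general $\lambda$-integrable functions. The identity passes from $\mathscr K$ to open sets of compact closure by monotone convergence, with $\mathscr L^1$-measurability of $t \mapsto V(t)(A_t)$ preserved as a pointwise supremum. A Dynkin argument against the $\pi$-system of such open sets inside a fixed rectangle of finite $\lambda$-measure extends the identity to all Borel $A$ there, and $\sigma$-finiteness promotes this to all Borel $A$. Beppo Levi then extends to nonnegative Borel functions, and the splitting $g = g^+ - g^-$ to Borel $\lambda$-integrable $g$. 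For a general $\lambda$-integrable $g$, Borel regularity of $\lambda$ produces a Borel $\tilde g$ equal to $g$ $\lambda$-almost everywhere, and applying the Borel identity to the indicator of the exceptional set shows $g(t,\cdot,\cdot) = \tilde g(t,\cdot,\cdot)$ holds $V(t)$-almost everywhere for $\mathscr L^1$-almost every $t$, so both iterated integrals coincide with $\int g \, d\lambda$.

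For \ref{Brakke flow:space time measure:3}, given $\mathscr L^1$-null $N$, compact $I \subset J$, and compact $K \subset U$, Radon regularity of $\lambda$ and $\mathscr L^1$ combined with \ref{Brakke flow:space time measure:1} give $\lambda((N \cap I) \times \G_k(K)) \leq \int_{N \cap I}\|V(t)\|(K) \, d\mathscr L^1_t = 0$ via \ref{Brakke flow:thm:1:2}; exhausting $J \times \G_k(U)$ by countably many such rectangles yields $\lambda(N \times \G_k(U)) = 0$. For \ref{Brakke flow:space time measure:4}, choose Borel $I_1 \subset I \subset I_2$ in $\mathbf R$ with $\mathscr L^1(I_2 \setminus I_1) = 0$; then $I_1 \times B$ and $I_2 \times B$ are Borel, hence $\lambda$-measurable, while their symmetric difference lies in the $\lambda$-null Borel set $(I_2 \setminus I_1) \times \G_k(U)$, so Borel regularity of $\lambda$ forces $I \times B$ to be $\lambda$-measurable.

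I expect the main obstacle to lie in the Fubini extension in \ref{Brakke flow:space time measure:2}: since $\{V(t)\}_{t \in J}$ is not a product measure but a varying family of Radon measures on $\G_k(U)$, the textbook product-measure Fubini theorem is unavailable, and the monotone class argument must be conducted by hand, with verification at each stage that $t \mapsto V(t)(A_t)$ is $\mathscr L^1$ measurable. This verification rests critically on the measurability statement \ref{Brakke flow:thm:1:6} and on the local mass bound \ref{Brakke flow:thm:1:2}, both of which are ultimately consequences of the upper semicontinuity result \ref{prop:interior convergence}.
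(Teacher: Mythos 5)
Your proposal is correct and takes essentially the same route as the paper: $\lambda$ is produced via the Riesz representation theorem from the measurability \ref{Brakke flow:thm:1:6} and the local mass bound \ref{Brakke flow:thm:1:2}, the Fubini identity \ref{Brakke flow:space time measure:2} is established by a monotone-class argument, and \ref{Brakke flow:space time measure:3}, \ref{Brakke flow:space time measure:4} then follow by the same null-set and Borel-sandwiching considerations. The one technical variation is in \ref{Brakke flow:space time measure:2}, where the paper works directly with the class of $\lambda$-measurable functions satisfying the conclusion and reduces to characteristic functions of open sets via Federer's countable-image approximation \cite[2.3.3]{MR41:1976}, whereas you run a Dynkin $\pi$-$\lambda$ argument over Borel sets followed by Beppo Levi and a Borel-regularity transfer to general $\lambda$-integrable functions; both are standard implementations of the same monotone-class idea and hinge on the same measurability of the slice map $t \mapsto V(t)(A_t)$.
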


\begin{proof}
    Proof of \ref{Brakke flow:space time measure:1},
    noting by \ref{Brakke flow:thm:1:2}, for every $0\leq f \in \mathscr K(U)$ and $a,b\in J$ with $a\leq b$,
    \[ \sup \{ \|V(t)\|(f) : a \leq t \leq b\} <\infty ,\]
    and by \ref{Brakke flow:thm:1:6} $t\mapsto V(t)(\alpha) \cdot \omega(t)$ is $\mathscr L^1\weight J$ measurable for $\omega \in \mathscr K(J)$ and 
    $\alpha \in \mathscr K(\G_k(U))$,
    and 
    $\{ \omega \cdot \alpha : \omega \in \mathscr K(J), \alpha \in \mathscr K(\G_k(U))\}$ is dense in $\mathscr K(J \times \G_k(U))$.
    We may then define the Radon measure $\lambda$ associated with the following linear functional 
    \[ \textstyle \lambda(f) = \int_J \int f(t,x,S) \ d V(t)_{(x,S)} \ d \mathscr L^1_t \ \text{for $f\in \mathscr K(J\times \G_k(U))$}.\]
    Proof of \ref{Brakke flow:space time measure:2},
    let $F$ be the class of all  $\lambda$ measurable $g: J \times \G_k(U) \to \overline{\mathbf R}$ which satisfies the conclusions of \ref{Brakke flow:space time measure:2}. Whenever
    \begin{gather*}
        g_1,g_2,\dots \in F, \\
        g(t,x,S) = \lim\limits_{i\to\infty} g_i(t,x,S) \ \text{for $(t,x,S) \in J\times \G_k(U)$},
    \end{gather*}
    let $N_g$ be the set of $t\in J$ such that $g(t,\cdot,\cdot)$ is not $V(t)$ integrable.
    If $0\leq g_1,g_2,\uparrow g$,
then for every $t\in J \sim \bigcup_{i=1}^\infty N_{g_i}$, by the Lebesgue monotone convergence theorem
\[ \textstyle \int g(t,x,S) \ d V(t)_{(x,S)} = 
\lim\limits_{i\to\infty}\int g_i(t,x,S) \ d V(t)_{(x,S)},
\]
 and we infer that $g$ belongs to $F$.

 Suppose $f :J \times \G_k(U) \to \overline{\mathbf R}$ is a $\lambda$ summable function belonging to $F$,
$g_i\in F$ for $i\in \mathscr P$, and 
\begin{gather*}
    |g_i(t,x,S)|\leq f(t,x,S)  \ 
    \text{for $(t,x,S) \in J\times \G_k(U)$} , \\
     g(t,x,S) = \lim\limits_{i\to\infty}g_i(t,x,S) \ 
    \text{for $(t,x,S) \in J\times \G_k(U)$}.
\end{gather*}
Note that for $\mathscr L^1$ almost every $t\in J$, 
\[\textstyle  \int |g_i(t,x,S)| \ d V(t)_{(x,S)}\leq \int f(t,x,S) \ d V(t)_{(x,S)} <\infty,\] 
by the Lebesgue dominated convergence theorem, we have
\[ \textstyle \int g(t,x,S) \ d V(t)_{(x,S)} = \lim\limits_{i\to\infty} \int g_i(t,x,S) \ d V(t)_{(x,S)},\]
 and we infer, again from the Lebesgue dominated convergence theorem that
\[ \textstyle  \int\int g(t,x,S) \ d V(t)_{(x,S)} \ d \mathscr L^1_t = \lim\limits_{i\to\infty} \int\int g_i(t,x,S) \ d V(t)_{(x,S)}, \]
therefore $g$ belongs to $F$.

By splitting $g$ into $g^+,g^-$ and approximating $g^+,g^-$ by $\lambda$ measurable function with countable image using \cite[2.3.3]{MR41:1976}, we reduced the problem to $g$ equals a characteristic function $\varphi$ of $\lambda$ measurable set $A$ with finite $\lambda$ measure. Now, the assertion follows from noting that $\varphi\in F$ if $A$ is an open set, because the topology admits a countable base.

Proof of \ref{Brakke flow:space time measure:3},
whenever $K \subset \G_k(U)$ is a compact set and 
$I_1,I_2,\dots$ are decreasing open sets containing $N$ such that $\lim\limits_{i\to\infty }\mathscr L^1(I_i) = 0$, one readily verifies by \ref{Brakke flow:space time measure:2} and the Lebesgue dominated convergence theorem that 
\[ \lambda(N\times K)\leq \lim\limits_{i\to\infty} \lambda(I_i\times K) =   0,\]
and the conclusion follows from expressing $\G_k(U)$ as a countable union of compact sets.

Proof of \ref{Brakke flow:space time measure:4}, 
covering $J$ by a countable family of $\mathscr L^1$ measurable subsets of $J$, we may assume $I$ has finite $\mathscr L^1$ measure, the conclusion then follows by approximating $I$ by its compact subsets and \ref{Brakke flow:space time measure:3}.
\end{proof}

\begin{lemma}\label{Borel regularity lemma:1}
    Suppose $n, \nu\in \mathscr P$.
    Then the function 
    \[ (T,\theta) \mapsto T(\theta), \ \text{for $(T,\theta)\in \mathscr K'(\mathbf R^n, \mathbf{R}^{\nu})\times \mathscr K(\mathbf R^n, \mathbf{R}^{\nu})$}\]
    is Borel.
\end{lemma}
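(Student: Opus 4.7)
The plan is to realize the evaluation map $(T,\theta) \mapsto T(\theta)$ as a pointwise limit of jointly continuous functions, since pointwise limits of continuous real-valued functions are Borel. The main topological subtlety is that $\mathscr K(\mathbf R^n)$, equipped with its strict inductive limit topology, is not metrizable, so Lebesgue's theorem on separately continuous functions cannot be invoked directly on the whole product space; the workaround is to stratify $\mathscr K(\mathbf R^n)$ by the Borel subspaces $\mathscr K_{K}(\mathbf R^n)$.

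First I would fix an exhaustion $K_1 \subset \Int K_2 \subset K_2 \subset \ldots$ of $\mathbf R^n$ by compact sets, so that $\mathscr K(\mathbf R^n) = \bigcup_i \mathscr K_{K_i}(\mathbf R^n)$ with each $\mathscr K_{K_i}(\mathbf R^n)$ both a Borel subset of $\mathscr K(\mathbf R^n)$ and a separable Banach space under the supremum norm. It thus suffices to prove Borelness on each $\mathscr K'(\mathbf R^n) \times \mathscr K_{K_i}(\mathbf R^n)$ and then patch. On each such stratum the evaluation is separately continuous: continuous in $T$ for fixed $\theta$ by the weak-$*$ topology on $\mathscr K'(\mathbf R^n)$, and continuous in $\theta$ for fixed $T$ because $T|_{\mathscr K_{K_i}(\mathbf R^n)}$ is a bounded linear functional on a Banach space.

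Next I would pick a countable dense set $\{\varphi_{i,j}\}_j \subset \mathscr K_{K_i}(\mathbf R^n)$, and for each $m \in \mathscr P$ a continuous partition of unity $\{\chi_{i,m,j}\}_j$ on $\mathscr K_{K_i}(\mathbf R^n)$ subordinate to the cover by $(1/m)$-balls around the $\varphi_{i,j}$, which exists since separable Banach spaces are paracompact. Set
\[ f_{i,m}(T,\theta) = \textstyle \sum_j \chi_{i,m,j}(\theta)\, T(\varphi_{i,j}); \]
each summand is jointly continuous, and the sum is locally finite, so $f_{i,m}$ is jointly continuous on $\mathscr K'(\mathbf R^n) \times \mathscr K_{K_i}(\mathbf R^n)$. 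For any fixed $(T,\theta)$, the boundedness of $T|_{\mathscr K_{K_i}(\mathbf R^n)}$ yields $|T(\varphi_{i,j}) - T(\theta)| \to 0$ uniformly in $j$ on the support of $\chi_{i,m,j}(\theta)$, hence $f_{i,m}(T,\theta) \to T(\theta)$ as $m \to \infty$.

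The main obstacle is thus the non-metric inductive limit topology on $\mathscr K(\mathbf R^n)$; circumventing it via stratification is the key idea. Once stratum-wise Borelness is secured, Borelness on the whole product follows because $\mathscr K_{K_i}(\mathbf R^n)$ is Borel in $\mathscr K(\mathbf R^n)$ and the family $\{\mathscr K_{K_i}(\mathbf R^n)\}_i$ exhausts $\mathscr K(\mathbf R^n)$.
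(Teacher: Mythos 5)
Your argument is correct but longer than the paper's, which uses the same core idea---stratifying by the compact support of $\theta$ to circumvent the non-metrizable inductive limit topology on $\mathscr K(\mathbf R^n)$---but then finishes more directly. The paper stratifies \emph{both} variables, writing $\mathscr K'(\mathbf R^n)\times\mathscr K(\mathbf R^n)=\bigcup_{i,j\in\mathscr P}A(i,j)\times\mathscr K_{\mathbf B(0,i)}(\mathbf R^n)$ with $A(i,j)=\mathscr K'(\mathbf R^n)\cap\{T:\|T\|(\mathbf B(0,i))\leq j\}$, and simply observes that $F$ is already \emph{jointly} continuous on each such Borel piece: since $T\in A(i,j)$ forces $T|\mathscr K_{\mathbf B(0,i)}(\mathbf R^n)$ to have operator norm at most $j$, the estimate $|T_1(\theta_1)-T_2(\theta_2)|\leq j\cdot\sup|\theta_1-\theta_2|+|T_1(\theta_2)-T_2(\theta_2)|$ upgrades separate continuity to joint continuity on $A(i,j)\times\mathscr K_{\mathbf B(0,i)}(\mathbf R^n)$. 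This makes your partition-of-unity approximants $f_{i,m}$ and the passage to a Baire-class-one pointwise limit unnecessary---which is where essentially all of your extra work goes---at the small cost of checking that the sublevel sets $A(i,j)$ are Borel, an easy consequence of the lower semicontinuity of $T\mapsto\|T\|(G)$ for open $G$, a check your version avoids.
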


\begin{proof}
    We denote the function $(T,\theta)\mapsto T(\theta)$ by $F$.
    For every $i,j\in \mathscr P$, one readily verifies
    $\mathscr K'(\mathbf R^n, \mathbf{R}^{\nu})\times\mathscr K(\mathbf R^n, \mathbf{R}^{\nu}) = \bigcup_{i,j\in \mathscr P}  A(i,j) \times \mathscr K_{\mathbf B(0,i)}(\mathbf R^n, \mathbf{R}^{\nu})$
    where 
    \[ A(i,j) = \mathscr K'(\mathbf R^n)\cap \{ T : \|T\| (\mathbf B(0,i))\leq j\}.\]
   One readily verifies that $F|A(i,j)\times \mathscr K_{\mathbf B(0,i)}(\mathbf R^n, \mathbf{R}^{\nu})$ is continuous.
\end{proof}
\begin{definition}
    Whenever $X$ is a locally compact space admitting a countable base,
    $\mathscr M_+(X)$ denotes the subspace of $\mathscr K'(X)$ consisting of Radon measures.
\end{definition}

\begin{remark}\label{lem:Borel graph lemma}
    Suppose $X$ and $Y$ are complete separable metric spaces, $Z_{1}, \ldots, Z_{m}$ are separable, locally compact metric spaces for $m \in \mathscr{P}$, $G$ is a Borel subset of $(\prod_{i = 1}^{m} \mathscr K'(Z_{i},\mathbf R^{\nu_{i}}) ) \times X \times  Y$, with $\nu_{1}, \ldots, \nu_{m} \in \mathscr{P}$,
    \begin{align*}
        & \textstyle p(T,x,y) = (T,x), \quad  \text{for $(T,x,y)\in (\prod_{i = 1}^{m} \mathscr K'(Z_{i},\mathbf R^{\nu_{i}}) ) \times X \times  Y$}
    \end{align*}
    and $p|G$ is univalent. 
    Then the function defined by $g(T,x)=y$, whenever $(T,x,y)\in G$, is a Borel function and $\dmn g = p(G)$ is a Borel subset of $(\prod_{i = 1}^{m} \mathscr K'(Z_{i},\mathbf R^{\nu_{i}}) ) \times X$.
\end{remark}

\begin{proof}
    We prove the statement for $m = 1$, with the general statement following by an identical argument.
    For every $(T,x,y)\in \mathscr K'(Z,\mathbf R^{\nu}) \times X  \times Y$,
    we define $q(T,x,y) = y$.
    and observe that 
    if $U$ is an open subset of $Y$, 
    then $g^{-1}(U) = p(q^{-1}(U) \cap G)$.
    The conclusion  then follows from showing that
    $p(B)$ is Borel, for any Borel subset $B \subset G$.
    By \cite[2.1, 2.23]{Menne-WDFV}, there exists 
    a continuous bijection $f$ from a complete, separable metric space $W$, to $\mathscr K'(Z,\mathbf R^\nu)\times X$.
    We denote $F (w, y) = (f (w), y)$, for $w \in W$, and $y \in Y$.
    Since $f$ is a continuous bijection, we have that $F^{-1}(B)$ is a Borel set, and $(p \circ F) |F^{-1} (B)$ is univalent, and thus
    the assertion follows from  \cite[2.2.10, p. 67]{MR41:1976}.
\end{proof}

\begin{lemma}\label{general measurability of quotient}
    Suppose $n \in \mathscr P$ and
    $b_{a,r}$ denotes   the characteristic function of $\mathbf B(a,r)$. Then the set $\bfh$ of $(T,\phi,a,\alpha) \in \mathscr K'(\mathbf R^n,\mathbf R^\nu)\times \mathscr M_+(\mathbf R^n)\times \mathbf R^n\times \mathbf R^\nu$ such that 
    \[  v\bullet \alpha = \lim\limits_{r\downarrow 0 }\frac{T(b_{a,r}\cdot v)}{\phi(\mathbf B(a,r))} \ \text{for $v\in \mathbf R^\nu$}\]
    is a Borel set, where $b_{a,r}$ is the characteristic function of $\mathbf B(a,r)$.
    Moreover, the function mapping $(T,\phi,a)\in \mathscr K'(\mathbf R^n,\mathbf R^\nu) \times \mathscr M_+(\mathbf R^n)\times \mathbf R^n$ onto $\alpha \in \mathbf R^\nu$ such that $(T,\phi,a,\alpha)\in \bfh$ is a Borel function with Borel domain. 
\end{lemma}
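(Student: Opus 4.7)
The plan is to reduce the assertion to two auxiliary Borel measurability statements, one about the numerator $(T, a, r) \mapsto T(b_{a,r} \cdot v)$ and one about the denominator $(\phi, a, r) \mapsto \phi(\mathbf B(a, r))$, and then to assemble the conclusion by countable operations on Borel functions, exploiting the linearity of the numerator in $v$.

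First I would approximate $b_{a,r}$ from above by the continuous compactly supported functions
\[ \theta_{a,r,k}(x) = \max\bigl\{0, \min\{1, 1 - k \max\{0, |x - a| - r\}\}\bigr\}, \quad k \in \mathscr P, \]
each equal to $1$ on $\mathbf B(a, r)$, supported in $\mathbf B(a, r + 1/k)$, and affine in between, so that $\theta_{a,r,k} \downarrow b_{a,r}$ pointwise. On each compact subset of $\mathbf R^n \times (0, \infty)$ the $\theta_{a,r,k}$ share a common compact support $K$, and $(a, r) \mapsto \theta_{a,r,k}$ is continuous into $\mathscr K_K(\mathbf R^n)$ in the sup norm. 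Decomposing $T(\theta \cdot v) = \sum_{i=1}^\nu v_i T(\theta \cdot e_i)$ along a basis and applying \ref{Borel regularity lemma:1} to each scalar functional $\theta \mapsto T(\theta \cdot e_i)$ gives that $(T, a, r) \mapsto T(\theta_{a,r,k} \cdot v)$ is Borel for each fixed $k$ and $v$; passing to the pointwise limit in $k$, which equals $T(b_{a,r} \cdot v)$ by dominated convergence against the vector Radon measure associated to $T$, yields that $(T, a, r) \mapsto T(b_{a,r} \cdot v)$ is Borel. The analogous approximation applied to scalar positive Radon measures establishes that $(\phi, a, r) \mapsto \phi(\mathbf B(a, r))$ is Borel on $\mathscr M_+(\mathbf R^n) \times \mathbf R^n \times (0, \infty)$.

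To assemble $\bfh$ and the function, I would fix a basis $e_1, \dots, e_\nu$ of $\mathbf R^\nu$. By linearity, the limit $\lim_{r \downarrow 0} T(b_{a,r} \cdot v) / \phi(\mathbf B(a, r))$ exists for every $v \in \mathbf R^\nu$ if and only if it exists along each $e_i$. For each $i$, the set of $(T, \phi, a)$ on which this scalar limit exists in $\mathbf R$ is Borel, being characterised by equality of $\limsup$ and $\liminf$ over a countable dense sequence $r \downarrow 0$; on that set, the limit itself is Borel. Intersecting over $i = 1, \dots, \nu$ produces a Borel domain $D$ and a Borel function $(T, \phi, a) \mapsto \alpha$ on $D$ satisfying $\langle e_i, \alpha \rangle = \lim_{r \downarrow 0} T(b_{a,r} \cdot e_i) / \phi(\mathbf B(a, r))$; the set $\bfh$ is then the graph of this function over $D$, which is Borel in the product, since the graph of any Borel function into a standard Borel space is Borel.

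The principal technical obstacle will be showing $(T, a, r) \mapsto T(b_{a,r} \cdot v)$ is Borel, as $b_{a,r}$ depends discontinuously on $(a, r)$ precisely where $\|T\|$ has mass on the sphere $\{x : |x - a| = r\}$. The monotone approximation $\theta_{a,r,k}$ converts this discontinuity into a controlled countable limit; one must verify that dominated convergence applies uniformly on the strata $A(i, j) = \mathscr K'(\mathbf R^n) \cap \{T : \|T\|(\mathbf B(0, i)) \leq j\}$ from the proof of \ref{Borel regularity lemma:1}, which is an elementary consequence of the bound $|T(\theta_{a,r,k} \cdot v)| \leq |v| \cdot \|T\|(\mathbf B(0, i))$ whenever $\mathbf B(a, r + 1) \subset \mathbf B(0, i)$.
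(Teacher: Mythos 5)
Your approach is essentially the same as the paper's: approximate the indicator $b_{a,r}$ from above by continuous cutoffs, invoke \ref{Borel regularity lemma:1} (componentwise, via $T(\theta\cdot v)=\sum_i v_i T(\theta\cdot e_i)$, which is a reasonable scalar reduction) to get Borel measurability of the numerator and denominator in $(T,\phi,a,r)$, and then pass to the limit $r\downarrow 0$ via a countable scheme. The paper mollifies with a smooth profile $\eta$ rather than a piecewise-linear one and handles the denominator by exhibiting upper semicontinuity of $(a,\mu)\mapsto\mu(\mathbf B(a,r))$, but these are cosmetic differences.

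There is one genuine gap: you assert that the set where $\lim_{r\downarrow 0}$ exists is ``characterised by equality of $\limsup$ and $\liminf$ over a countable dense sequence $r\downarrow 0$,'' but this is false for a general Borel function of $r$ (e.g.\ a function that is $0$ on the rationals and $1$ on the irrationals has equal $\limsup$ and $\liminf$ along the rationals yet no limit). Borel measurability in $(T,\phi,a,r)$, which you did establish, does not by itself license the countable reduction. What makes the reduction legitimate here is one-sided continuity in $r$ for fixed $(T,\phi,a)$: since $\mathbf B(a,r_n)\downarrow\mathbf B(a,r)$ as $r_n\downarrow r$ and $b_{a,r_n}\downarrow b_{a,r}$ pointwise, dominated convergence gives $T(b_{a,r_n}\cdot v)\to T(b_{a,r}\cdot v)$ and $\phi(\mathbf B(a,r_n))\to\phi(\mathbf B(a,r))$, so the quotient is right continuous in $r$ on the set where $0<\phi(\mathbf B(a,r))<\infty$. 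With this right continuity, the $\limsup$ and $\liminf$ as $r\downarrow 0$ are indeed computed by any countable set of radii that is dense from the right, and the countable characterisation of $\bfh$ (the paper uses $\bfh=\bigcap_i\bigcup_j h^{i^{-1}}_{j^{-1}}$ and states this right continuity explicitly) goes through. You should state and use this right continuity; otherwise the reduction to a countable family of conditions, which is the heart of the Borel argument, is unjustified.
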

\begin{proof}
    For each $0<r<\infty$
    and $v\in \mathbf R^n$,
    we define
    \begin{gather}
        G_r^v(T,\phi,a) = T(b_{a,r}\cdot v)\cdot \phi(\mathbf B(a,r))^{-1}  \ \text{if $0<\phi(\mathbf B(a,r))<\infty$}\label{meas func:1}\\
        M_r(T,\phi,a) = \phi (\mathbf B(a,r)), \label{meas func:2}\\
        F_r^v(T,\phi,a) = T(b_{a,r}\cdot v), \label{meas func:3}
    \end{gather}
    \text{for $T\in \mathscr K'(\mathbf R^n,\mathbf R^\nu)$, $a\in \mathbf R^n$, and $\phi \in \mathscr M_+(\mathbf R^n)$}.
    For every $0<\kappa,\gamma<\infty$, we define $h_\gamma^\kappa$ to be the set of 
    $(T,a,\phi,\xi)$ such that 
    \[\textstyle \sum_{i = 1}^{\nu} |e_{i}\bullet \xi - G^{e_{i}}_r(T,\phi,a)|\leq \kappa \ \text{for $0<r<\gamma$},\]
    where $e_{1}, \ldots, e_{\nu}$ are the unit coordinate vectors on $\mathbf{R}^{\nu}$. 
    
    We will show that 
    the functions defined in \eqref{meas func:1},
    \eqref{meas func:2}, and \eqref{meas func:3}
    are Borel functions with Borel domain, then using the right continuity of $G^v_r(T,\phi,a)$ in variable $r$, one readily verifies each $h^\kappa_\gamma$ is Borel, and $\bfh = \bigcap_{i=1}^\infty \bigcup_{j=1}^\infty h_{j^{-1}}^{i^{-1}}$.
    It is sufficient to prove that \eqref{meas func:2}
    and \eqref{meas func:3} are Borel, \eqref{meas func:2} is upper semicontinuous, that is if $(a_i,\mu_i) \to (a,\mu)$ as $i\to \infty$, then 
    \[ \limsup\limits_{i\to\infty} \mu_i(\mathbf B(a_i,r)) \leq \mu(\mathbf B(a,r)),\]
    in fact, whenever
    $U$ is an open set containing $\mathbf B(a,r)$, we may choose a nonnegative $f\in \mathscr K(\mathbf R^n)$ so that $0\leq f \leq 1$, $f(x)=1$ for $x\in \mathbf B(a_i,r)$ 
    for $i$ large enough, and $\mathbf B(a_i,r) \subset U$,
    we estimate
    \[ \limsup\limits_{i\to\infty} \mu_i(\mathbf B(a_i,r))\leq \lim\limits_{i\to\infty}\mu_i(f) = \mu(f) \leq \mu(U).\]
    To show the function in \eqref{meas func:3} is Borel, we choose $\eta \in \mathscr D(\mathbf R)$ with $\eta(t)=1$ for $t\leq 0$, 
    $\eta(t)=0$ for $t\geq 1$, and 
    $\eta'(t)\leq 0$ for $t\in \mathbf R$. Whenever $0<\varepsilon<\infty$, we define 
    $\varphi_{a,r,\varepsilon}\in \mathscr K(\mathbf R^n)$ by the following equation,
    \begin{equation*}
        \varphi_{a,r,\varepsilon}(x) = \eta\left(\frac{|x-a|-r}{\varepsilon}\right) \ \text{for $x\in \mathbf R^n$},
    \end{equation*}
    and one readily verifies 
    \begin{gather*}
    0\leq \varphi_{a,r,\varepsilon} \leq 1, \ 
    \spt \varphi_{a,r,\varepsilon} \subset \mathbf B(a,r+\varepsilon)  ,\\
        \lim\limits_{\varepsilon\downarrow 0} \varphi_{a,r,\varepsilon} = b_{a,r},\\
        \lim\limits_{\varepsilon \downarrow 0} T(\varphi_{a,r,\varepsilon} \cdot v) = T(b_{a,r} \cdot v)
        \ \text{for $T\in \mathscr K'(\mathbf R^n, \mathbf{R}^{\nu})$},
    \end{gather*} 
    and the function mapping $a\in \mathbf R^n$ onto $\theta(\cdot-a)\in \mathscr K(\mathbf R^n)$ is continuous.
    The assertion then follows from \ref{Borel regularity lemma:1}.
   The postscript follows from \ref{lem:Borel graph lemma}.
 \end{proof}

\begin{theorem}\label{thm:measurable function:tangent,density,mean curvature vector}
    Suppose $U$ is an open subset of $\mathbf R^n$. We define the following three relations
    \begin{enumerate}[label=\ref{thm:measurable function:tangent,density,mean curvature vector}.\arabic{enumi}]
        \item \label{thm:measurable function:tangent,density,mean curvature vector:1} $\bfT$ is the set of $(V,a,T) \in \Var_k(U) \times U \times \G(n,k)$ such that $V\in \Var_k(U)$, $a\in U$, and $T\in \G(n,k)$ such that $T$ is the unique tangent plane of $V$ at $a$.
        \item \label{thm:measurable function:tangent,density,mean curvature vector:2}
        $\bfh$ is the set of $(V,a,\xi)\in \Var_k(U) \times U \times \mathbf R^n$ such that  $\|\delta V\|$ is a Radon measure and
        \[ v \bullet \xi = \lim\limits_{r\downarrow 0}\frac{\delta V(b_{a,r}\cdot v)}{\|V\|(\mathbf B (a,r))} \ \text{for $v\in \mathbf R^n$}\]
        where $b_{a,r}$ is the characteristic function of $\mathbf B(a,r)$.
        \item 
        \label{thm:measurable function:tangent,density,mean curvature vector:3}
        $\Density$ is the set of $(V,a,\theta) \in \mathbf{V}_{k} (U) \times U \times \mathbf R$ such that $\Density^k(\|V\|,a)= \theta$.
    \end{enumerate}
     Then $\bfT,\bfh,\Density$ are Borel sets. Moreover, whenever 
     $\lambda$ and $V$ are as in \ref{thm:Brakke flow:space time measure}, the function mapping $(t,x)$ onto $(T,\xi,\theta) \in \G(n,k) \times \mathbf R^n \times \mathbf R$ such  that 
     \[ (V(t),x,T)\in \bfT, \ (V(t),x,\xi) \in \bfh, \ (V(t),x,\theta)\in \Density \]
     is a $\|\lambda\|$  measurable function with $\|\lambda\|$  measurable domain.
\end{theorem}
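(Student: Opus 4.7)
The plan is to first establish that $\bfT$, $\bfh$, and $\Density$ are Borel subsets of the respective product spaces, then extract Borel selector functions with Borel domain via the graph criterion \cite[2.2.13]{MR41:1976}, and finally compose with the $\mathscr L^1\weight J$ measurable map $t\mapsto V(t)$ of \ref{Brakke flow:thm:1:6}, using \ref{Brakke flow:space time measure:4} to promote joint $\mathscr L^1 \otimes \mathscr L^n$ measurability to $\|\lambda\|$ measurability.

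For $\bfh$, the Borel regularity is a direct application of \ref{general measurability of quotient} with $T=\delta V$ and $\phi=\|V\|$; one uses that $V\mapsto \|V\|$ is weak$^\ast$ continuous from $\Var_k(U)$ into $\mathscr M_+(U)$, and that on the Borel subset of $\Var_k(U)$ on which $\|\delta V\|$ is Radon (itself a Borel condition by lower semicontinuity of $V \mapsto \|\delta V\|(K)$ on compact $K$), the map $V\mapsto \delta V$ is Borel into $\mathscr K'(\mathbf R^n,\mathbf R^n)$ after extending by zero, which is harmless since only small balls around $a\in U$ enter the limit. For $\Density$, the same bump-approximation scheme as inside the proof of \ref{general measurability of quotient} applies, with the normaliser $\phi(\mathbf B(a,r))$ replaced by $\boldsymbol{\alpha}(k)r^k$, and right-continuity of $r\mapsto \|V\|(\mathbf B(a,r))$ handling the countable intersection over rational $r$. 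For $\bfT$, I pick a countable dense sequence $\{\varphi_i\}$ in $\mathscr K(\G_k(\mathbf R^n))$; joint Borel measurability in $(V,a,r)$ of the rescaled integral $r^{-k}\int \varphi_i(r^{-1}(x-a),S)\,dV_{(x,S)}$ follows from the same smooth bump approximation as in \ref{general measurability of quotient}, and the set of $(V,a,T)$ satisfying the identity of \ref{Unique tangent plane} for every $\varphi_i$, with density factor in $(0,\infty)$, is then a countable intersection of Borel sets. Application of \cite[2.2.13]{MR41:1976} produces the required Borel selector functions with Borel domain.

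For the measurability of the composed function, \ref{Brakke flow:thm:1:6} and \ref{Brakke flow:space time measure:4} yield that $(t,x)\mapsto (V(t),x)$ is $\lambda$ measurable, hence $\|\lambda\|$ measurable after projection, and composition with the Borel selectors preserves this. Coverage of $\|\lambda\|$ almost every point of $J\times U$ by the domain follows from \ref{Brakke flow:thm:1:4}: for $\mathscr L^1$ almost every $t\in J$, $V(t)\in\IVar_k(U)$ with $\bfh(V(t),\cdot)\in \mathbf L_2^{loc}(\|V(t)\|,\mathbf R^n)$, so by rectifiability together with \ref{proppar:integral characterization of generalized mean curvature} the tangent plane, density, and mean curvature vector are all defined $\|V(t)\|$ almost everywhere, and the Fubini-type statement \ref{Brakke flow:space time measure:2} promotes this to $\|\lambda\|$ almost everywhere coverage. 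The hard part will be the joint Borel measurability in $(V,a,r)$ of the rescaled integrals entering the $\bfT$ criterion, since unlike in \ref{general measurability of quotient} the integrand depends on both the base point and the scaling parameter; once this is in hand the rest is largely bookkeeping built on top of \ref{general measurability of quotient}.
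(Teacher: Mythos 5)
Your proposal follows essentially the same route as the paper: reduce to $U=\mathbf R^n$ by cutoff-and-extend, prove Borel regularity of the three relations, use \cite[2.2.13]{MR41:1976} to extract Borel selectors with Borel domain, compose with the $\mathscr L^1\weight J$ measurable map $t\mapsto V(t)$ via \ref{Brakke flow:thm:1:6} and \ref{Brakke flow:space time measure:4}, and argue coverage from \ref{Brakke flow:thm:1:4} and \ref{Brakke flow:space time measure:2}. The $\bfh$ and $\Density$ arguments in particular coincide with the paper's.

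One small over-complication in your treatment of $\bfT$: you invoke a bump-approximation to get joint Borel measurability in $(V,a,r)$ of the rescaled integral, citing the mechanism of \ref{general measurability of quotient}. That approximation is only needed there because the integrand $b_{a,r}$ is a characteristic function; for $\bfT$ the integrand $\varphi_i(r^{-1}(\cdot-a),\cdot)$ is already in $\mathscr K(\G_k(\mathbf R^n))$ and varies continuously with $a$, so $(V,a)\mapsto r^{-k}\int\varphi_i(r^{-1}(x-a),S)\,dV_{(x,S)}$ is jointly \emph{continuous} for each fixed $r$. The paper exploits this to show each set $T(\varepsilon,\delta,\alpha)$ is closed (no measurability-in-$r$ argument needed, since the intersection over $0<r\leq\delta$ of closed sets is closed), and the graph is then a countable $\bigcap\bigcup\bigcap$ of closed sets. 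Your route still lands on Borel sets, so this is not an error, but the continuity observation is what makes the $\bfT$ step actually the easiest of the three rather than, as you flagged, ``the hard part.'' Your remark about intersecting with the density-in-$(0,\infty)$ condition to go from the tangent \emph{varifold} to the tangent \emph{plane} is a detail the paper leaves implicit, so that addition is welcome.
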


\begin{proof}
    For each $a\in U$, choose $0<\delta <\infty$ and $\zeta \in \mathscr D(U)$ with $\mathbf B(a,\delta) \subset U$,
    $0\leq \zeta \leq 1$ and $\zeta(x)=1$ for $x\in \mathbf B(a,\delta)$, we define 
    \[ L_\zeta(V,a) = (V_\zeta,a) \in \Var_k(\mathbf R^n)\times \mathbf R^n \ \text{for $V\in \Var_k(U)$ and $a\in U$}\]
    where $V_\zeta$ is the extension of $V\weight \zeta$ by zero to $\mathbf R^n$.
    One readily verifies 
    \begin{gather*}
        \bfT(V,x) = \bfT'\circ L_\zeta(V,x),\\
        \bfh(V,x) = \bfh' \circ L_\zeta(V,x),\\
        \textstyle \Density(V,x) = \Density'\circ L_\zeta(V,x)
    \end{gather*}
    for every $V\in \Var_k(U)$ and $x\in \mathbf B(a,\delta)$,
    where $\bfT'$, $\bfh'$, and $\Density'$ are the set as in \ref{thm:measurable function:tangent,density,mean curvature vector}
    with $U = \mathbf R^n$.
    In view of \ref{lem:Borel graph lemma}, it is sufficient to prove the case $U = \mathbf R^n$.

    For every $0<\varepsilon,\delta <\infty$, and $\alpha \in \mathscr{K}(\G_k(\mathbf{R}^{n}))$, let
    $B(\varepsilon,\delta,\alpha)$ be the  set  of $ (V,a,W) \in \Var_k(\mathbf R^n) \times \mathbf R^n \times \Var_k(\mathbf R^n)$
    such that 
    \[ \textstyle |W(\alpha) - r^{-k}\int \alpha(r^{-1}(x-a),S) \ d V_{(x,S)}|\leq \varepsilon \ \text{for $0<r\leq \delta$},\]
    clearly $B(\varepsilon,\delta,\alpha)$ is closed,
    and let $B$ be the set of $(V,a,W)$ such that 
    \[ \textstyle W(\alpha) = \lim\limits_{r\downarrow 0} r^{-k}\int \alpha(r^{-1}(x-a),S) \ d V_{(x,S)} \
     \text{for $\alpha \in \mathscr K(\G_k(\mathbf R^n))$},\]
     because
     $B = \bigcap_{\alpha \in D}\bigcap_{i=1}^\infty \bigcup_{j=1}^\infty B(i^{-1},j^{-1},\alpha)$
     for some countable dense subset $D$ of $\mathscr K(\G_k(\mathbf R^n))$,
     $B$ is a Borel set.
     
     Proof of \ref{thm:measurable function:tangent,density,mean curvature vector:2},
     as the function $(V,a) \mapsto (\delta V,\|V\|,a)$
      is continuous, the assertion then follows from
      \ref{general measurability of quotient}.

    Proof of \ref{thm:measurable function:tangent,density,mean curvature vector:3}, whenever $i,j\in \mathscr P$, let 
    $\Density(i,j)$ to be the set of $(\phi,a,\theta)$ such that 
    \[ \left|\theta - \frac{\phi(\mathbf B(a,r))}{\boldsymbol{\alpha} (k) r^k}\right| \leq i^{-1}  \ \text{for $0<r<j^{-1}$}.\]
    Note that $\Density(i,j) = \bigcap_{0<r<j^{-1}} \Density(i,j,r)$, where 
    \[ \Density(i,j,r) = \mathscr M_+(\mathbf R^n)\times \mathbf R^n \times \mathbf R \cap 
    \left\{(\phi,a,\theta) : \left| \theta -\frac{\phi(\mathbf B(a,r))}{\boldsymbol{\alpha} (k) r^k}\right| \leq i^{-1} \right\}\]
    is Borel for every $0<r<\infty$.

    From the right continuity of $\phi(\mathbf B(a,r))$ in variable $r$, we infer 
    \[ \Density(i,j) = \bigcap \{ \Density(i,j,r):0<r<j^{-1}, \ \text{$r$ is rational}\}\]
    is a Borel set. Then the conclusion follows as $\Density = \bigcap_{i=1}^\infty \bigcup_{j=1}^\infty \Density(i,j)$. 
    Moreover, as $(V, a) \mapsto (\| V \|, a)$ is continuous, by \ref{lem:Borel graph lemma} we have that the function $(V, a) \mapsto \Density^{k} (\| V \|, a)$ is a Borel function, with Borel domain.
    
    Proof of \ref{thm:measurable function:tangent,density,mean curvature vector:1}, we use 
    \ref{thm:measurable function:tangent,density,mean curvature vector:3} to define a Borel function by
    \[ \textstyle \pi(V,x,T) = (V,x,\Density^k(\|V\|,x)\cdot \vVarOp(T)),\]
    for $T\in  \G(n,k)$ and $(V,x,\Density^k(\|V\|,x))\in \Density$,
    and observe that $\bfT = \pi^{-1}(B)$ is a Borel set.
    
     To prove the postscript,
     we define 
    \[ F(t,x) = (V (t),x) \ \text{if $t\in \dmn V (\, \cdot \,)$ and $x\in U$},\]
    \[ \overline{\bfT}(V,x) = T \ \text{ if $(V,x,T) \in \bfT$},\]
    by \ref{Brakke flow:space time measure:4}, $F$ is $\|\lambda\|$ measurable.
    From \ref{lem:Borel graph lemma}, we infer that $\overline{\bfT}$ is a Borel function, with a Borel domain, therefore
    \[ \|\lambda\|(J\times U\cap \{ (t,x): (t,x) \notin \dmn \overline{\bfT}\circ F\}) =0,\]
    because for $\mathscr L^1$ almost every $t$, and $\| V (t) \|$ almost every $x$, $(t,x) \in \dmn \overline{\bfT}\circ F$, and thus by \ref{Brakke flow:space time measure:2},
    \[\textstyle \int \|V(t)\| (\{ x : (t,x) \notin\dmn \overline{\bfT}\circ F\}) \ d \mathscr L^1_t = 0.\]
    A similar argument holds for $\bfh$ and $\Density$ with $\bf T$ replaced by $\bfh$ and $\Density$.
\end{proof}

\begin{theorem}\label{def:Brakke flow}
    Suppose $J$ is an interval, $U$ is an open subset of $\mathbf R^n$, and
    $V : J \to \Var_k(U)$.
 Then the two statements are equivalent,
    \begin{enumerate}
        \item\label{def pt: time integrated definition of Brakke flow}
    For every 
    nonnegative $\phi : U \to \mathbf R$ of class $2$ with compact support,  and $a,b\in J$ with $a\leq b$,
    \begin{equation}\label{eq:Brakke intgeral ineq}
            \|V(b)\|(\phi) - \|V(a)\|(\phi)
            \leq \textstyle \int_a^b \mathscr B(\|V(t)\|,\phi) \ d \mathscr L^1.
    \end{equation}
    \item For every $t\in J$ and nonnegative $\phi : U \to \mathbf R$ of class $2$ with compact support,
    \begin{equation}\label{eq:Brakke differential ineq}
        \overline{\D}_s\|V(s)\|(\phi)|_t \leq \mathscr B(\|V(t)\|,\phi).
     \end{equation}      
    \end{enumerate}
\end{theorem}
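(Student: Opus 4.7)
For the direction $(2) \Rightarrow (1)$, the plan is to invoke \ref{Brakke flow:thm:1:7} directly, applied with $\mu(t) := \|V(t)\|$: the hypothesis \eqref{eq:Brakke differential ineq} is precisely the defining condition of a Brakke flow in the sense of \ref{Ilmanen-def-Brakke flow}, so the conclusion of \ref{Brakke flow:thm:1:7} yields \eqref{eq:Brakke intgeral ineq}.

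For $(1) \Rightarrow (2)$, I would set $\mu(t) := \|V(t)\|$, fix $t_0 \in J$ and $\phi$ nonnegative of class $2$ with compact support, and work to verify $\D^+_s \mu(s)(\phi)|_{t_0} \leq \mathscr B(\mu(t_0),\phi)$; the bound on $\D^-$ is symmetric. My starting observation is that, via \ref{Elementary estimate:from diff eq} combined with local mass bounds (derivable from (1) by arguments analogous to those giving \ref{Brakke flow:thm:1:2}), $\mathscr B(\mu(\cdot),\phi)$ is locally bounded above in time, so feeding this into \eqref{eq:Brakke intgeral ineq} yields the one-sided estimates
\[ \limsup_{s \downarrow t_0} \mu(s)(\phi) \leq \mu(t_0)(\phi) \leq \liminf_{s \uparrow t_0} \mu(s)(\phi). \]
If $\limsup_{s \downarrow t_0} \mu(s)(\phi) < \mu(t_0)(\phi)$, then $\D^+_s \mu(s)(\phi)|_{t_0} = -\infty$ and the required inequality holds trivially, so I reduce to the case $\lim_{s\downarrow t_0} \mu(s)(\phi) = \mu(t_0)(\phi)$.

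Arguing by contradiction, I would assume there exists $M \in \mathbf R$ with $\D^+_s \mu(s)(\phi)|_{t_0} > M > \mathscr B(\mu(t_0),\phi)$, extract $s_n \downarrow t_0$ with $\mu(s_n)(\phi) - \mu(t_0)(\phi) > M(s_n - t_0)$, and combine \eqref{eq:Brakke intgeral ineq} with the analogue of \ref{Brakke flow:thm:1:4} available under (1) to produce $\tau_n \in (t_0,s_n)$ satisfying $V(\tau_n) \in \IVar_k(U)$ and $\mathscr B(\|V(\tau_n)\|,\phi) > M$. By local mass bounds, \ref{prop:interior convergence} then furnishes, along a subsequence, a weak limit $V^* \in \Var_k(U)$ of $V(\tau_n)\weight U_\phi$ with $\mathscr B(\|V^*\|,\phi) \geq M$.

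The contradiction hinges on identifying $\|V^*\|\weight U_\phi = \mu(t_0)\weight U_\phi$, since then \ref{Locality of Brakke variation} would give $\mathscr B(\|V^*\|,\phi) = \mathscr B(\mu(t_0),\phi) < M$, clashing with $\mathscr B(\|V^*\|,\phi) \geq M$. To establish this identification, for each nonnegative class $2$ function $f$ with compact support in $U_\phi$ I would choose $\delta > 0$ small enough that $\phi - \delta f \geq 0$, apply \eqref{eq:Brakke intgeral ineq} separately to $f$ and to $\phi - \delta f$ over $[t_0, \tau_n]$, and combine these with the assumed convergence $\mu(\tau_n)(\phi) \to \mu(t_0)(\phi)$ to sandwich $\mu(\tau_n)(f) \to \mu(t_0)(f)$; a density argument then upgrades this to weak convergence of $\mu(\tau_n)\weight U_\phi$ to $\mu(t_0)\weight U_\phi$, pinning down $\|V^*\|$. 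The main obstacle is exactly this last identification: the one-sided nature of \eqref{eq:Brakke intgeral ineq} yields only upper bounds on $\mu(\tau_n)(f)$ directly, so the choice of the auxiliary test function $\phi - \delta f$, together with the right-continuity of $\mu(\cdot)(\phi)$ at $t_0$, is essential for extracting the matching lower bound and closing the argument.
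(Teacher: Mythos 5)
Your proposal is correct and follows essentially the same route as the paper: the direction $(2)\Rightarrow(1)$ is \ref{Brakke flow:thm:1:7}, and $(1)\Rightarrow(2)$ proceeds by extracting intermediate times from the integral inequality, applying the upper semicontinuity \ref{prop:interior convergence} to obtain a limit varifold, and identifying its weight with $\mu(t_0)$ on $U_\phi$. The only stylistic difference is that where you spell out the identification step directly via the auxiliary test function $\phi - \delta f$ (obtaining the matching lower bound on $\mu(\tau_n)(f)$ from the upper bound on $\mu(\tau_n)(\phi-\delta f)$ together with the convergence of $\mu(\tau_n)(\phi)$), the paper invokes the analogue of \ref{Brakke flow:thm:1:3} and \ref{Brakke flow:thm:1:3-add-add} obtained from \cite[4.3]{Lahiri2017EqualityOT}, which encapsulates the same computation; this is a presentational, not a substantive, difference.
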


\begin{proof}
    By \ref{Brakke flow:thm:1:7}, \eqref{eq:Brakke differential ineq} implies \eqref{eq:Brakke intgeral ineq}.
    It remains to show the converse, we assume 
    $-\infty < \D^+ \|V(t)\|(\phi)$, we can choose
    $t_i \downarrow t$ as $i\to\infty$ and  
    \[ \frac{\|V(t_i)\|(\phi)-\|V(t)\|(\phi)}{t_i-t} \leq  \mathscr B(\|V(s_i)\|,\phi) \]
    for some $t<s_i<t_i$, and by \ref{prop:interior convergence}, 
    passing to subsequence if necessary,
    we may also assume that 
    there exist $W \in \mathbf{V}_{k} (U)$ such that,
     \[ \limsup\limits_{i \to \infty}\mathscr B(\|V(s_i)\|, \phi) \leq \mathscr B(\|W\|,\phi)\]
     \[ W  = \lim\limits_{i\to\infty} V(s_i) \weight U_\phi.\]
     Arguing similarly to \cite[4.3]{Lahiri2017EqualityOT}, \ref{Brakke flow:thm:1:3-add-add} also holds in the setting of \ref{def:Brakke flow}.\ref{def pt: time integrated definition of Brakke flow}, implying that 
     \[ \| V (t) \|(\psi) =  \| W \| (\psi) \ \text{for $\psi \in \mathscr K(U)$ with $\spt \psi \subset U_\phi$},\]
     therefore $\|V(t)\|\weight U_\phi = \|W\|\weight U_\phi$.
     By a similar argument for $\D^-$, the conclusion follows.
     
\end{proof}

\begin{remark}
    The idea of the proof of \ref{def:Brakke flow} is taken from the proof of 
    \cite[Main result B5 implies B1]{Lahiri2017EqualityOT} (see also \cite[4.4]{AS-GeneralisedBrakkeFlow}).
\end{remark}
\begin{lemma}[[\protect{\cite[Theorem 3.28]{ambrosio2000functions}}]\label{lem:one dimensional BV}
    Suppose $J$ is an open interval, $u \in \mathscr D'(J)$, and $\|u'\|$ is a Radon measure, 
    \begin{enumerate}[label=\ref{lem:one dimensional BV}.\arabic{enumi}]
        \item \label{lem:one dimensional BV:1} If $\|u'\|(J)<\infty$, then there exists a left continuous  $u^l :J \to \mathbf R$, and a right continuous $u^r : J \to \mathbf R$ function, uniquely characterised by 
    \[ u(\omega) = \textstyle \int u^l(t)\cdot \omega(t) \ d \mathscr L^1_t \ \text{for $\omega \in \mathscr D(J)$},\]
     \[ u(\omega) = \textstyle \int u^r(t)\cdot \omega(t) \ d \mathscr L^1_t \ \text{for $\omega \in \mathscr D(J)$}.\]
     \item \label{lem:one dimensional BV:2} There exists an $\mathscr L^1$ locally summable function $v : J \to \mathbf R$ such that 
     \[ u(\omega) = \textstyle \int \omega(t)\cdot v(t) \ d \mathscr L^1_t \ \text{for $\omega \in \mathscr D(J)$}.\]
    \end{enumerate}
\end{lemma}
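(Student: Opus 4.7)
The plan is to prove \ref{lem:one dimensional BV:2} first by building a locally $\mathscr L^1$ summable representative of $u$ directly from the signed Radon measure inducing $u'$, and then to deduce \ref{lem:one dimensional BV:1} by exhibiting explicit one-sided continuous modifications of that representative.

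Since $\|u'\|$ is a Radon measure, $u'$ is representable by integration, so there exists a signed Radon measure $\xi$ on $J$ with $\|\xi\| = \|u'\|$ such that $u'(\omega) = \int \omega \, d\xi$ for every $\omega \in \mathscr D(J)$. Fixing $a_0 \in J$, I would define $v(t) = \xi((a_0, t])$ for $t > a_0$, $v(a_0) = 0$, and $v(t) = -\xi((t, a_0])$ for $t < a_0$; writing $\xi = \xi^+ - \xi^-$ presents $v$ as the local difference of two monotone functions, hence locally bounded and $\mathscr L^1$ summable on compact subsets of $J$. It remains to show that $v$ represents $u$ up to a constant, which reduces to verifying $\int v(t)\,\omega'(t)\, d\mathscr L^1_t = u(\omega')$ for $\omega \in \mathscr D(J)$. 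Applying Fubini's theorem on the half-line $t > a_0$,
\[ \textstyle \int_{a_0}^{\sup J} \xi((a_0, t])\, \omega'(t)\, d\mathscr L^1_t = \int_{(a_0, \sup J)} \Bigl( \int_s^{\sup J} \omega'(t)\, d\mathscr L^1_t \Bigr) d\xi(s) = -\int_{(a_0, \sup J)} \omega(s)\, d\xi(s), \]
since $\omega$ vanishes near $\sup J$, and the analogous computation on $t < a_0$ yields the corresponding contribution on $(\inf J, a_0]$. Summing gives $\int v\, \omega'\, d\mathscr L^1 = -\int \omega\, d\xi = -u'(\omega) = u(\omega')$, so $(u - v)' = 0$ in $\mathscr D'(J)$ and thus $u - v$ equals some constant $C$, completing \ref{lem:one dimensional BV:2}.

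For \ref{lem:one dimensional BV:1}, the hypothesis $\|u'\|(J) < \infty$ makes $\xi$ a finite signed measure on $J$, so the functions
\[ u^r(t) = \xi((a_0, t] \cap J) - \xi((t, a_0] \cap J) + C, \qquad u^l(t) = \xi((a_0, t) \cap J) - \xi([t, a_0] \cap J) + C, \]
are defined on all of $J$ (with the convention that the second summand of $u^r$ is zero when $t \geq a_0$, and similarly for $u^l$), and differ from $v + C$ only at the $\xi$-atoms, hence agree with $v + C$ $\mathscr L^1$ almost everywhere and represent $u$ distributionally. Right-continuity of $u^r$ follows from continuity of the finite positive measures $\xi^{\pm}$ from above on nested decreasing intervals of the form $(a_0, t_n]$, and left-continuity of $u^l$ follows from continuity from below on $(a_0, t_n)$. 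For uniqueness, any two left-continuous functions on $J$ agreeing $\mathscr L^1$ almost everywhere must agree pointwise: for each $t \in J$, since $J$ is open, one may select $s_n \uparrow t$ in the full-measure agreement set, and left-continuity then forces pointwise coincidence; the right-continuous case is identical.

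The main obstacle will be executing the Fubini interchange cleanly for the signed (rather than positive) measure $\xi$ and organising the definition of $v$ at $a_0$ and at atoms of $\xi$ in such a way that the one-sided continuity of the modifications $u^r$ and $u^l$ is transparent.
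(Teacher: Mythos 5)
Your proof is correct and is in essence the same construction as the paper's: both represent $u$ by the cumulative distribution function of the signed Radon measure $\xi$ inducing $u'$, verify the distributional derivative by Fubini, and obtain the one-sided continuous versions from $\xi((\,\cdot\,,t])$ versus $\xi((\,\cdot\,,t))$. The only organizational difference is that you prove \ref{lem:one dimensional BV:2} directly by anchoring the distribution function at a fixed interior point $a_0$, whereas the paper proves \ref{lem:one dimensional BV:1} first (implicitly anchoring at $\inf J$, which requires the finiteness hypothesis) and then obtains \ref{lem:one dimensional BV:2} by restricting to an exhausting sequence of relatively compact subintervals and patching via the uniqueness clause; your ordering is arguably a little cleaner since it avoids the patching step, and the Fubini-for-signed-measures concern you flag at the end is routine (split $\xi = \xi^+ - \xi^-$ and note $\omega$ has compact support).
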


\begin{proof}
Proof of \ref{lem:one dimensional BV:1},
   the uniqueness property follows from the continuity, so 
   it is sufficient to establish the existence.
    In fact, one readily verifies by Fubini's theorem that the distributional derivative of 
    \[ w^l(t) = u'(J \cap \{ s : s <t\}) \ \text{for $t\in J$}\]
    \[ w^r(t) = u'(J \cap \{ s : s \leq t\}) \ \text{for $t\in J$}\]
    is $u'$. 
    By \cite[4.1.4]{MR41:1976} there exists a unique $c\in \mathbf R$ such that $u (t) = c + w^r (t)$ and $u (t) = c + w^l (t)$ for $\mathscr{L}^{1}$ almost all $t \in J$, and thus the existence follows by taking $u^l = c + w^l$
    and $u^r = c + w^r$.

    Proof of \ref{lem:one dimensional BV:2},
    for every relatively compact open subinterval $I$ of $J$, we denote $u_I$ to be the restriction of $u$ to $I$, one readily verifies that 
    $\|u_I'\|(I)<\infty$. 
    We apply \ref{lem:one dimensional BV:1} with $J$ and $u$ replaced by $I$ and $u_I$, to obtain a $u_I^l$ as in \ref{lem:one dimensional BV:1}. 
    Choosing $I_1,I_2,\dots,$ to be an increasing sequence of relatively compact open subintervals of $J$, covering $J$, and \ref{lem:one dimensional BV:2} follows.
\end{proof}

\begin{remark}\label{additivity of canoical representation}
    The set $F$ of $u\in \mathscr D'(J)$ satisfying the hypothesis of $\ref{lem:one dimensional BV}$ is a vector subspace. From the uniqueness part of the conclusion of \ref{lem:one dimensional BV},  if $c\in \mathbf R$ and $u,v\in F$, then 
    \[ (cu+v)^l = c\cdot u^l + v^l, \ (cu+v)^r = c\cdot u^r + v^r.\]
\end{remark}

 We now prove {\bf Theorem \ref{intro:thm:4}}.

\begin{proof}
 Let $\mathscr{K}^+(\mathbf{G}_{k} (U))$ be the set of nonnegative members of $ \mathscr K(\G_k(U))$.
    For $\alpha \in \mathscr{K}^{+} (\mathbf{G}_{k} (U))$, we define  a Radon measure $R_\alpha$ on $J$ characterised by
    \begin{equation*}
        R_{\alpha} (\omega) = \textstyle{\int} \omega (t) \alpha (x, S) \, d V_{(t, x, S)}, \quad  \text{for } \omega \in \mathscr{D} (J).
    \end{equation*}
    First, proving for an appropriate countable dense subset of $\mathscr{K}^+(\mathbf{G}_{k} (U)) $ (by \cite[2.8.17, 2.9.5]{MR41:1976}), and then extending to the whole of $\mathscr{K}^{+} (\mathbf{G}_{k} (U))$, we have that there exists a $P \subset J$, with $\mathscr{L}^{1} (J \sim P) = 0$, such that for all $t \in P$, and $\alpha \in \mathscr{K}^{+} (\mathbf{G}_{k} (U))$,
    \begin{eqnarray*}
         R_{\alpha}^{+} (t) &=& \lim_{\varepsilon \searrow 0}  \varepsilon^{-1} R_{\alpha} ({\{ s \, \colon \, t \leq s < t + \varepsilon\}}), \\
         R_{\alpha}^{-} (t) &=& \lim_{\varepsilon \searrow 0} \varepsilon^{-1} R_{\alpha} ({\{ s \, \colon \, t - \varepsilon < s \leq t \}}), \\
         R_{\alpha} (t) &=& \lim_{\varepsilon \searrow 0} (2\varepsilon)^{-1} R_{\alpha} ({\{ s \, \colon \, t - \varepsilon < s \leq t + \varepsilon\}}), 
    \end{eqnarray*}
    all exist, and are finite, non-negative real numbers.
    For all $t \in P$, we may then define Radon measures $\langle V^{+}, t \rangle$, $\langle V^{-}, t \rangle$, and $\langle V, t \rangle$, over $\mathbf{G}_{k} (U)$, characterised by,
    \begin{equation*}
        \langle V^{+}, t \rangle (\alpha) = R_{\alpha}^{+} (t), \, \langle V^{-}, t \rangle (\alpha) = R_{\alpha}^{-} (t), \, \langle V, t \rangle (\alpha) = R_{\alpha} (t),
    \end{equation*}
    for $\alpha \in \mathscr{K}^{+} (\mathbf{G}_{k} (U))$.
    Moreover, there exists an $N \subset P$, with $\mathscr{L}^{1} (P \sim N) = 0$, such that $\langle V^{+}, t \rangle$, $\langle V^{-}, t \rangle$, and $\langle V, t \rangle$ are all equal for $t \in N$.
    
    Proof of \ref{Space time Brakke flow:0}, for $\phi : U \to \mathbf R$ a nonnegative function of class $2$ with compact support, we define 
    \[\textstyle T_{\phi}(\omega) = - R_{\phi}'(\omega) + \int \omega(t)\cdot \mathscr B(\|\langle V,t\rangle \|,\phi) \ d \mathscr L^1_t \ \text{for $\omega \in \mathscr D(J)$},\]
    by \eqref{eq:Brakke's distributional inequality}, $T_{\phi} \geq 0$, and by the
    Riesz representation theorem, $T_{\phi}$ is representable by a Radon measure, therefore 
    $R_{\phi}'$ is  representable by integration, and
    by \ref{lem:one dimensional BV:2} we have that $R_{\phi}$ will be absolutely continuous with respect to $\mathscr{L}^{1} \weight J$, and $R_{\phi} (t) = \langle V, t \rangle (\phi)$ is locally of bounded variation on $J$, completing the first conclusion of \ref{Space time Brakke flow:0}.
    Then, for $\alpha \in \mathscr{K}^{+} (\mathbf{G}_{k} (U))$, one may construct a non-negative, class 2 function $\phi \colon U \rightarrow \mathbf{R}$, with compact support, such that 
    \begin{equation*}
        \sup \{ \alpha (x, S) \, \colon S \in \mathbf{G} (n, k) \} \leq \phi (x),
    \end{equation*}
    for all $x \in U$.
    Therefore $R_{\alpha} \leq R_{\phi}$, and thus will be absolutely continuous with respect to $\mathscr{L}^{1} \weight J$, and hence, by \cite[2.9.7]{MR41:1976}, the second conclusion of \ref{Space time Brakke flow:0} follows.
    
    Proof of \ref{Space time Brakke flow:1} and \ref{Space time Brakke flow:2}.
     Suppose $\phi : U \to \mathbf R$ is a nonnegative function of class $2$ with compact support.
     Let $g(t) = \mathscr B(\|\langle V,t\rangle\|,\phi)$ for $t\in J$.
    By a suitable approximation of the characteristic function of a compact nondegenerate interval of $J$, we infer that
\begin{equation}\label{space time flow:priori local mass bound}
        \textstyle \|\langle V,b\rangle\|(\phi) - \|\langle V,a\rangle\| (\phi) \leq \int_a^b g(t) \ d \mathscr L^1_t \ \text{for $a,b\in J\sim N$},
    \end{equation}
    Whenever $a,b\in \Clos J$, $a\leq b$, $I = \{ s : a < s <b\}$, 
    we define 
        \begin{equation}
        \label{eq:distribution of time variable}
            \textstyle v(\omega) = \int \omega (t)\cdot \phi(x) \ d \|V\|_{(t.x)} \ \text{for $\omega \in \mathscr D(I)$},
        \end{equation}
        \begin{equation}
        \label{eq:the differerbce distribution of flow and linear support flow}
            S(\omega) = v(\omega) - \textstyle \int L \cdot t\cdot  \omega(t) \ d \mathscr L^1_t \ \text{for $\omega \in \mathscr D(I)$},
        \end{equation}
        and assume that $0 \leq L < + \infty$ is such that
        \begin{equation}
        \label{eq:essential bound of variation}
           g(t) \leq L \ \text{for $\mathscr L^1$ almost every $t\in I$}.
        \end{equation}
        Note that for every $t\in J\sim N$, 
by \ref{Ilmanen-def}, \cite[6.6]{Ilmanen-EllipticRegularization} and \eqref{space time flow:priori local mass bound},
        the assumption \eqref{eq:essential bound of variation} holds on relatively compact subintervals $I$ of $J$.
     For every  $0\leq \omega \in \mathscr D(I)$, we estimate by 
    the definition of distributional derivative, integration by parts,
    \eqref{eq:Brakke's distributional inequality} and \eqref{eq:essential bound of variation},
        \begin{align*}\label{eq:the distributional derivative of linear support flow}
            S' (\omega) &= -v(\omega') +\textstyle \int L \cdot t\cdot  \omega'(t) \ d \mathscr L^1_t \\
            &= -v(\omega')- \textstyle \int L \cdot \omega(t) \ d \mathscr L^1_t\\
            & \leq \textstyle \int \omega(t) \cdot (g(t) - L) \ d \mathscr L^1_t \leq 0.
        \end{align*}
We may then further assume that
\begin{equation}\label{eq:finite variation of mass flow}
    \|S'\|(I)<\infty.
\end{equation}
Then there exists (see the proof of \ref{lem:one dimensional BV:1}) a unique $c\in \mathbf R$ such that
    \begin{gather*}
        u^l(t) = c + S'(\{ s : a < s < t\}) \ \text{for $t\in I$},\\
        u^r(t) = c + S'(\{ s : a < s \leq t\}) \ \text{for $t\in I$},
        \end{gather*}
     are respectively left and right continuous, with $u^{r} \leq u^{l}$, and
    \begin{gather}
        S(\omega) = \textstyle \int u^l(t)\cdot \omega(t) \ d \mathscr L^1_t \ \text{for $\omega \in \mathscr D(I)$} \label{eq:left representability} ,\\
        S(\omega) = \textstyle \int u^r(t)\cdot \omega(t) \ d \mathscr L^1_t \ \text{for $\omega \in \mathscr D(I)$}. \label{eq:right representability}
    \end{gather}
     We infer from \ref{additivity of canoical representation}, \ref{Space time Brakke flow:0}, and \eqref{eq:the differerbce distribution of flow and linear support flow} that 
     \begin{gather*}
         u^l(t) +L\cdot t \ \text{for $t\in I$},\\
         u^r(t) +L\cdot t \ \text{for $t\in I$}
     \end{gather*}
     are respectively the left and right continuous representatives of $v$, which implies that for all $t\in J$,
\begin{gather*}
    \|\langle V^+,t\rangle \|(\phi) -L\cdot t = \lim\limits_{\varepsilon \downarrow 0} \frac{1}{\varepsilon} \textstyle \int_t^{t+\varepsilon} u^r(s) \ d \mathscr L^1_s = u^r(t),\\
    \|\langle V^-,t\rangle \|(\phi) -L\cdot t= \lim\limits_{\varepsilon \downarrow 0} \frac{1}{\varepsilon} \textstyle \int_{t-\varepsilon}^{t} u^{l}(s) \ d \mathscr L^1_s= u^l(t).
\end{gather*}
    By \ref{Space time Brakke flow:0}, \eqref{eq:finite variation of mass flow} holds for every 
    relatively compact subinterval $I$ of $J$, and
     \ref{Space time Brakke flow:1}
    and \ref{Space time Brakke flow:2} follow.

Proof of \ref{Space time Brakke flow:3}, by the left continuity of $t \mapsto \| \langle V^{-}, t \rangle \| (\phi)$, and fact that $\| \langle V^{-}, t \rangle \| (\phi) = \| \langle V, t \rangle \| (\phi)$ for all $t \in N$, the inequality \eqref{space time flow:priori local mass bound} can be extended to 
every $a,b\in J$ for $\| \langle V^{-}, \, \cdot \, \rangle \| (\phi)$, and hence by \ref{def:Brakke flow}, $t \mapsto \| \langle V^{-}, t \rangle \|$ is a Brakke flow. 
By the right continuity of $t \mapsto \| \langle V^{+}, t \rangle \| (\phi)$, we similarly deduce that $t \mapsto \| \langle V^{+}, t \rangle \| (\phi)$ is a Brakke flow.
The fact that $\| \langle V^{+}, t \rangle \| (\phi) \leq \| \langle V^{-}, t \rangle \| (\phi)$
similarly follows from the left and right continuity,
\ref{Brakke flow:thm:1:3}, and the fact that these quantities are equal for $\mathscr{L}^{1}$ almost all $t \in J$.

Proof of  
\ref{Space time Brakke flow:4}, the sufficiency part of the statement follows from
 \ref{Space time Brakke flow:3}, because
\[\textstyle  \|\langle V^-,b\rangle \|(\phi) -  \|\langle V^+,a\rangle \|(\phi) 
\leq \int_a^b \mathscr B(\|\langle V,t\rangle \|,\phi) \ d \mathscr L^1_t 
\]
holds for every $a,b\in J$ with $a \leq b$, and $\phi : U \to \mathbf R$ nonnegative of class $2$ with compact support.
The necessity part of the statement follows from first noting that for $\mathscr{L}^{1}$ almost all $t \in J$, 
\begin{equation*}
    \nu (t) = \| \langle V, t \rangle \| = \| \langle V^{-}, t \rangle \| = \| \langle V^{+}, t \rangle \|,
\end{equation*}
and combining this with \ref{Brakke flow:thm:1:3}, and the left and right continuity of $\| \langle V^{-}, t \rangle \|$, and $\| \langle V^{+}, t \rangle \|$. 
To prove the postscript, we apply the argument in the proof of \ref{Space time Brakke flow:1} and \ref{Space time Brakke flow:2} to the end point of $\Clos J$ to extend the inequality \eqref{space time flow:priori local mass bound}.
    \end{proof}

\begin{definition}\label{def:Brakke flow:space time measure}
    We say a Radon measure $V$ over $J \times \mathbf{G}_{k} (U)$ is a \textit{space-time-Grassmann Brakke flow} if and only if the  following three statements hold,
    \begin{enumerate}
        \item For $\mathscr{L}^{1}$ almost all $t \in J$, $\langle V, t \rangle \in \mathbf{IV}_{k} (U)$.
        \item For every non-negative, class 2 function $\phi \colon U \rightarrow \mathbf{R}$, with compact support $\mathscr B(\|\langle V,\cdot\rangle \|,\phi) \in \mathbf{L}_{1}^{\text{loc}} (J)$.
        \item For every non-negative $\omega \in \mathscr{D} (J)$, and every non-negative, class 2 function $\phi \colon U \rightarrow \mathbf{R}$, with compact support, we have that, 
    \begin{equation*}
        - \textstyle \int \omega'(t) \phi(x) \ d \|V\|_{(t,x)} 
        \leq \textstyle \int \omega(t) \mathscr B(\|\langle V,t\rangle \|,\phi) \ d \mathscr L^1_t.
    \end{equation*}
    \end{enumerate}
\end{definition}

\begin{propparagraph}\label{proppara: Brakke flow gives a space-time-Grassmann Brakke flow}
    One obtains the following equivalence between the definitions \ref{Ilmanen-def-Brakke flow}, and \ref{def:Brakke flow:space time measure}.
    In one direction, for a space-time-Grassmann Brakke flow $V$, by \ref{Space time Brakke flow:4}, there exists a non-empty family of associated Brakke flows.
    In the other direction, given a Brakke flow $\mu \colon J \rightarrow \mathscr{M}_{+} (U)$, there exists a unique space-time-Grassmann Brakke flow $V$, associated to $\mu$, such that, 
    \begin{equation*}
        \| \langle V^{+}, t \rangle \| \leq \mu (t) \leq \| \langle V^{-}, t \rangle \|, \text{ for all $t \in J$.}
    \end{equation*}
    Indeed, we take $V$ equal to the $\lambda$ associated to $\mu$ in \ref{Brakke flow:space time measure:1}.
    Then, for a non-negative class 2 function $\phi \colon U \rightarrow \mathbf{R}$, with compact support, one applies \cite[A.2]{Lahiri2017EqualityOT} to the function $t \mapsto \mu (t) (\phi)$, making note of \ref{Brakke flow:thm:1:2}, and \ref{Brakke flow:thm:1:5}, to conclude that the function $t \mapsto \mathscr{B} (\| \langle V, t \rangle \|, \phi)$ lies in $\mathbf{L}_{1}^{\text{loc}} (J)$. 
    Moreover, for non-negative $\omega \in \mathscr{D} (J)$, the distributional Brakke inequality in \ref{def:Brakke flow:space time measure} follows from applying \cite[A.2]{Lahiri2017EqualityOT} to the function $t \mapsto \omega (t) \mu (t) (\phi)$, and
    \begin{equation}\label{eq:partial space time ineq}
        \overline{\D}_{t} (\omega (t) \mu (t) (\phi))|_{s} \leq \omega' (s) \mu (s) (\phi) + \omega (s) \mathscr{B} (\mu (s), \phi) \quad \text{for $s\in J$}.
    \end{equation}
    To see \eqref{eq:partial space time ineq} , we may assume $-\infty < \D^+_t (\omega (t) \mu (t) (\phi))|_{s}$, then  by \ref{Brakke flow:thm:1:3-add-add}, 
    \begin{gather*}
        \lim\limits_{s\downarrow t} \mu(s)(\omega'(t)\cdot \phi) = \mu(t)(\omega'(t)\cdot\phi),
    \end{gather*}
Apply $\limsup\limits$ as $s \downarrow t$ to
    \[\mu(s)\left[\left(\frac{\omega(s)-\omega(t)}{s-t}- \omega'(t)\right)\cdot \phi\right] + \mu(s)(\omega'(t)\phi) + \left(\frac{\mu(s)-\mu(t)}{s-t}\right)(\omega(t)\cdot \phi),\]
    taking \ref{Brakke flow:thm:1:2} into account, and
     applying a similar argument for $\D^-_{t} (\omega (t) \mu (t) (\phi))|_{s}$ we obtain  the assertion.
    
    The uniqueness of $V$ then follows from combining the fact that for any space-time-Grassmann Brakke flow $W$, by \ref{Space time Brakke flow:0},
    \begin{equation*}
        W_{(t,x, S)}(\omega(t)\cdot \alpha(x, S)) = \textstyle \int \omega(t)\cdot \langle W, t\rangle (\alpha) \ d \mathscr L^1_t,
    \end{equation*}
    for every $\alpha \in \mathscr{K} (\mathbf{G}_{k} (U))$, and $\omega \in \mathscr{K} (J)$, along with the fact that for $\mathscr{L}^{1}$ almost all $t \in J$, $\langle W, t \rangle \in \mathbf{IV}_{k} (U)$, and thus will be uniquely determined by its weight $\| \langle W, t \rangle \|$.
    The claimed inequalities now follow directly from \ref{Space time Brakke flow:4}.
\end{propparagraph}

\begin{remark}
    For a space-time-Grassmann Brakke flow $V$, as 
    non-negaitve members of $\mathscr{D} (J \times U)$ can be approximated by a non-negative sequence of
    $\mathscr{D} (J) \otimes \mathscr{D} (U)$ (see \cite[3.1]{Menne-WDFV} or \cite[4.1.2, 4.1.3]{MR41:1976}), and by \ref{Brakke flow:thm:1:4}, and the fact that $\mathbf{h} (\langle V, t \rangle, x)$ is $\| V \|$ measurable (from \ref{thm:measurable function:tangent,density,mean curvature vector}), we may rewrite the Brakke inequality as the following 
    \begin{eqnarray*}
        && \textstyle \int \D_{s} \phi (s, x)|_{t} \, d \| V \|_{(t, x)} \\
        && \hspace{1cm} \leq \textstyle\int - \phi(t, x) |\mathbf{h} (\langle V, t \rangle, x)|^{2} + \mathrm{grad}_{y} \phi (t, y)|_{x} \bullet \mathbf{h} (\langle V, t \rangle, x) \, d \| V \|_{(t, x)},
    \end{eqnarray*}
    for any non-negative $\phi \in \mathscr{D} (J \times U)$.
\end{remark}
\begin{remark}
    Suppose $J$ is a bounded open interval, $\nu $ is a Brakke flow from $\Clos J$ to  set of Radon measures over $U$. Then 
    for every nonnegative $\phi : U \to \mathbf R$ of class $2$ with compact support, 
    the function  
    \[ v(t) = \nu(t)(\phi) \ \text{for $t\in J$}\]
    satisfies $\|v\|(J) + \|v'\|(J)<\infty$.
     So the condition \eqref{sufficient condition for extension} is also necessary for the extension of Brakke flow to exist.

    In fact,
    by \eqref{Brakke flow variation bound} and \cite[A.1]{Lahiri2017EqualityOT}, there exists a positive real number $L$ such that
    \[ v(t) - L\cdot t \quad \text{for $t\in J$}\] is a bounded nonincreasing function, and
    the assertion then follows from \cite[3.27]{ambrosio2000functions}.
    
\end{remark}
\bibliographystyle{plain}

\Addresses

\end{document}